\definecolor{webgreen}{rgb}{0,.5,0}
\definecolor{webbrown}{rgb}{.6,0,0}
\begin{document}

\begin{center}
\epsfxsize=4in

\end{center}

\theoremstyle{plain}
\newtheorem{theorem}{Theorem}
\newtheorem{corollary}[theorem]{Corollary}
\newtheorem{lemma}[theorem]{Lemma}
\newtheorem{proposition}[theorem]{Proposition}

\theoremstyle{definition}
\newtheorem{definition}[theorem]{Definition}
\newtheorem{example}[theorem]{Example}
\newtheorem{conjecture}[theorem]{Conjecture}

\theoremstyle{remark}
\newtheorem{remark}[theorem]{Remark}

\newtheorem {Congruence}{Congruence}

\begin{center}
\vskip 1cm{\LARGE\bf M\'enage Numbers and M\'enage Permutations }
\vskip 1cm \large
Yiting Li\\
Department of Mathematics \\
Brandeis University \\
{\tt yitingli@brandeis.edu}
\end{center}

\newcommand{\Addresses}{{
  \bigskip
  \footnotesize

  Yiting Li, \textsc{Department of Mathematics, Brandeis University,
415 Soutrh Street, Waltham, MA 02453, USA}\par\nopagebreak
  \textit{E-mail address}, Yiting Li: \texttt{yitingli@brandeis.edu}

}}

\vskip .2 in

\newcommand {\stirlingf}[2]{\genfrac[]{0pt}{}{#1}{#2}}
\newcommand {\stirlings}[2]{\genfrac\{\}{0pt}{}{#1}{#2}}

\begin{abstract}
In this paper, we study the combinatorial structures of straight and
ordinary m\'enage permutations. Based on these structures, we prove
four formulas. The first two formulas define a relationship between
the m\'enage numbers and the Catalan numbers. The other two formulas
count the m\'enage permutations by number of cycles.
\end{abstract}

\section{Introduction}
\subsection{Straight m\'enage permutations and straight m\'enage numbers}
The straight m\'enage problem asks for the number of ways one can
arrange $n$ male-female pairs along a linearly arranged table in
such a way that men and women alternate but no woman sits next to
her partner.

We call a permutation $\pi\in S_n$ a $straight$ {\it m\'enage}
$permutation$ if $\pi(i)\ne i$ and $\pi(i)\ne i+1$ for $1\le i\le
n$. Use $V_n$ to denote the number of straight m\'enage permutations
in $S_n$. We call $V_n$ the $nth$ $straight$ {\it menage} $number$.

The straight m\'enage problem is equivalent to finding $V_n$. Label
the seats along the table as $1,2,\ldots,2n$. Sit the men at
positions with even numbers and women at positions with odd numbers.
Let $\pi$ be the permutation such that the man at position $2i$ is
the partner of the woman at position $2\pi(i)-1$ for $1\le i\le n$.
Then, the requirement of the straight m\'enage problem is equivalent
to the condition that $\pi(i)$ is neither $i$ nor $i+1$ for $1\le
i\le n$.

\subsection{Ordinary m\'enage permutations and ordinary m\'enage numbers}
The ordinary m\'enage problem asks for the number of ways one can
arrange $n$ male-female pairs around a circular table in such a way
that men and women alternate, but no woman sits next to her partner.

We call a permutation $\pi\in S_n$ an $ordinary$ {\it m\'enage}
$permutation$ if $\pi(i)\ne i$ and $\pi(i)\not\equiv i+1$ (mod $n$)
for $1\le i\le n$. Use $U_n$ to denote the number of ordinary
m\'enage permutations in $S_n$. We call $U_n$ the $nth$ $ordinary$
{\it m\'enage} $number$.

The ordinary m\'enage problem is equivalent to finding $U_n$. Label
the seats around the table as $1,2,\ldots,2n$. Sit the men at
positions with even numbers and women at positions with odd numbers.
Let $\pi$ be the permutation such that the man at position $2i$ is
the partner of the woman at position $2\pi(i)-1$ for all $1\le i\le
n$. Then, the requirement of the ordinary m\'enage problem is
equivalent to the condition that $\pi(i)$ is neither $i$ nor $i+1$
(mod $n$) for $1\le i\le n$.

We hold the convention that the empty permutation $\pi_\emptyset\in
S_0$ is both a straight m\'enage permutation and an ordinary
m\'enage permutation, so $U_0=V_0=1$.

\subsection{Background}
Lucas \cite{Lucas} first posed the problem of finding ordinary
m\'enage numbers. Touchard \cite{Touchard} first found the following
explicit formula \eqref{eq:known_formulas_for_U_n}. Kaplansky and
Riordan \cite{Kaplansky} also proved an explicit formula for
ordinary m\'enage numbers. For other early work in m\'enage numbers,
see \cite{Kaplansky1943,MW} and references therein. Among more
recent papers, there are some using bijective methods to study
m\'enage numbers. For example, Canfield and Wormald \cite{CW} used
graphs to address the question. One can find the following formulas
of m\'enage numbers in \cite{Bogart,Touchard} and in Chapter 8 of
\cite{Riordan}:
\begin{align}\label{eq:known_formulas_for_U_n}
U_m=\sum\limits_{k=0}^m(-1)^k\dfrac{2m}{2m-k}{2m-k\choose
k}(m-k)!\quad\quad(m\ge2);\\
V_n=\sum\limits_{k=0}^n(-1)^k{2n-k\choose
k}(n-k)!\quad\quad(n\ge0).\label{eq:known_formulas_for_V_n}
\end{align}

The purpose of the current paper is to study the combinatorial
structures of straight and ordinary m\'enage permutations and to use
these structures to prove some formulas of straight and ordinary
m\'enage numbers. We also give an analytical proof of Theorem
\ref{thm:main_theorem_1} in Section \ref{appendix}.

\subsection{Main results}

Let $C_k$ be the $k$th $Catalan$ $number$:
\[
C_k=\dfrac{(2k)!}{k!\,(k+1)!}
\]
and $c(x)=\sum\limits_{k=0}^\infty c_kx^k$. Our first main result is
the following theorem.
\begin{theorem}\label{thm:main_theorem_1}
\begin{align}\label{eq:result_of_straight_menage_numbers}
\sum\limits_{n=0}^{\infty}n!\,x^n=\sum\limits_{n=0}^\infty
V_nx^nc(x)^{2n+1}, \\
\sum\limits_{n=0}^{\infty}n!\,x^n=c(x)+c'(x)\sum\limits_{n=1}^\infty
U_nx^nc(x)^{2n-2}.\label{eq:result_of_ordinary_menage_numbers}
\end{align}
\end{theorem}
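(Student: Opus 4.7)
The plan is to tackle Theorem~\ref{thm:main_theorem_1} both algebraically and combinatorially. Algebraically, the key substitution is $y = xc(x)^2 = c(x) - 1$ (equivalently $x = y/(1+y)^2$), which rewrites the first identity as
\[
\sum_{n\ge 0} V_n y^n = \sum_{n\ge 0} n!\,\frac{y^n}{(1+y)^{2n+1}}.
\]
Extracting the coefficient of $y^m$ and reindexing $k = m - n$ then recovers the known inclusion--exclusion formula~\eqref{eq:known_formulas_for_V_n}. A similar manipulation of the second identity, using $c'(x) = c(x)^2/(1 - 2xc(x))$, reduces it to the compact relation $U(y) = y + (1-y)V(y)$, equivalent to $U_m = V_m - V_{m-1}$ for $m\ge 2$; this in turn follows from \eqref{eq:known_formulas_for_U_n} and \eqref{eq:known_formulas_for_V_n} by a direct coefficient manipulation. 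This is the analytical route and presumably matches the appendix proof.

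For a combinatorial proof (suggested by the paper's emphasis on ``combinatorial structures''), the plan is to decompose each $\sigma \in S_m$ bijectively into a m\'enage core $\pi \in V_n$ on a reindexed subset together with a Catalan-valued decoration of total size $m - n$. Given $\sigma$, identify the \emph{bad positions} $\{i : \sigma(i) \in \{i, i+1\}\}$. The bad cells of $\sigma$ in the permutation matrix form a matching in a path graph on $2m$ vertices (alternating rows and columns) whose edges are the forbidden diagonal cells. Intuitively, the bad structure can be ``contracted'' by a canonical rule, after which the remaining cells yield a m\'enage permutation $\pi \in V_n$; the contraction data is encoded by a sequence of $2n+1$ plane trees attached to the $2n+1$ gaps around the contracted path, giving the factor $c(x)^{2n+1}$.

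For the ordinary case the forbidden graph becomes a cycle of length $2n$ due to the modular adjacency $\sigma(i) \not\equiv i+1 \pmod n$. Breaking the cycle at one distinguished edge, which is counted by $c'(x)$ (differentiation combinatorially points one element of a Catalan structure), reduces to a linear configuration with $2n - 2$ ordinary gaps and yields the factor $c(x)^{2n-2}$. The additive term $c(x)$ outside the sum separately handles the degenerate case $n = 0$, where the m\'enage core is empty and $\sigma$ is encoded by a single plane tree of size $m$.

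The main obstacle is making the contraction rule precise and canonical, and then verifying that the generating function of the Catalan decorations attached to a fixed core of size $n$ really equals $c(x)^{2n+1}$ (respectively $c'(x)c(x)^{2n-2}$). Since the adjacency structure of the reindexed core depends subtly on which rows and columns are removed, choosing the right contraction rule---and showing the resulting plane-tree sequence exactly records the discarded information---is the technically delicate step. The small-$n$ degenerate cases in the ordinary setting (especially $n\le 1$, where the cycle is too short to support a genuine matching) will also require separate treatment.
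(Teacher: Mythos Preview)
Your analytical route is correct and matches the paper's Section~\ref{appendix} almost line for line: the substitution $y=xc(x)^2=c(x)-1$ (equivalently $x=y/(1+y)^2$) converts \eqref{eq:result_of_straight_menage_numbers} into $\sum_n V_n y^n=\sum_n n!\,y^n/(1+y)^{2n+1}$, which is precisely the paper's identity~\eqref{qqq}, and the ordinary case is handled there via the same relation $\sum_n U_n x^n = x+(1-x)\sum_n V_n x^n$ that you state. So as a proof of Theorem~\ref{thm:main_theorem_1} your algebraic argument is complete and is essentially the paper's own analytical proof.

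Your combinatorial sketch, however, is only a plan, and the ``main obstacle'' you flag is exactly the substance of the paper's bijective proof. The paper does \emph{not} attach $2n+1$ plane trees to $2n+1$ gaps; instead it splits the Catalan data into two distinct species. Starting from a straight m\'enage core $\pi\in S_m$, one (a) inserts into each of the $m+1$ linear gaps a permutation induced by a noncrossing partition (Lemma~\ref{lemma:a_permutation_can_be_reduced_to_null_iff...} shows these are counted by $C_d$), and (b) replaces each of the $m$ points of $\pi$ by a ``nice bijection'' from $[r]$ to $\{2,\dots,r+1\}$ (Lemma~\ref{lemma:count_nice_bijection} shows these are also counted by $C_r$). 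The contraction rule is then simply the iterated application of two elementary reductions---deleting a fixed point (type~1) and gluing a succession $i\mapsto i+1$ (type~2)---and the paper checks that this inverse is well defined and exhausts $S_{m+n}$. This is what produces $c(x)^{m+1}\cdot c(x)^m=c(x)^{2m+1}$; your single undifferentiated ``sequence of plane trees'' does not by itself explain why the gap-fillers and the point-replacements are \emph{different} Catalan objects, which is the crux of the argument.

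For the ordinary case your heuristic ``break the cycle at a distinguished edge, which gives $c'(x)$'' is in the right spirit but not the actual mechanism. In the circular picture there are only $m$ gaps and $m$ points, and after the insertions one must additionally \emph{choose which point of the resulting circular diagram is labelled~$1$}. The paper shows this choice ranges over the $d_m+t_1+1$ points in $P_1\cup Q_m$, and it is this multiplicity that converts one factor of $c(x)^2$ into $(xc(x)^2)'=c'(x)$, yielding $c'(x)c(x)^{2m-2}$ (Theorem~\ref{gf_of_r}). The degenerate case $m=0$ is handled separately there, giving the additive $c(x)$ exactly as you anticipated.
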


Our second main result counts the straight and ordinary m\'enage
permutations by the number of cycles.

For $k\in\mathbb{N}$, use $(\alpha)_k$ to denote
$\alpha(\alpha+1)\cdots(\alpha+k-1)$. Define $(\alpha)_0=1$. For
$k\le n$, use $C_n^k$ ($D_n^k$) to denote the number of straight
(ordinary) m\'enage permutations in $S_n$ with $k$ cycles.
\begin{theorem}
\begin{align}\label{eq:straight_by_cycles}
1+\sum_{n=1}^\infty\sum_{j=1}^n
C_n^j\alpha^jx^n=\sum\limits_{n=0}^\infty(\alpha)_n\frac{x^n}{(1+x)^n(1+\alpha
x)^{n+1}},
\end{align}
\begin{align}\label{eq:ordinary_by_cycles} 1+\sum_{n=1}^\infty\sum_{j=1}^n
D_n^j\alpha^jx^n=\frac{x+\alpha x^2}{1+x}+(1-\alpha
x^2)\sum\limits_{n=0}^\infty(\alpha)_n\frac{x^n}{(1+x)^{n+1}(1+\alpha
x)^{n+1}}.
\end{align}
\end{theorem}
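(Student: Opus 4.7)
The plan is to apply inclusion-exclusion on the forbidden positions and translate the resulting alternating sum into a generating-function identity via a tiling bijection. For both formulas, the starting point is
\[
\sum_{\pi}\alpha^{c(\pi)} \;=\; \sum_{\sigma}(-1)^{|\sigma|}\sum_{\pi\supseteq\sigma}\alpha^{c(\pi)},
\]
where $\pi$ ranges over m\'enage permutations of $[n]$ and $\sigma$ over partial permutations supported on the forbidden positions. The key structural observation is that extending $\sigma$ to a full permutation amounts to choosing an arbitrary permutation of the reduced vertex set obtained by contracting each path-orbit of $\sigma$ to a single point; hence, if $\sigma$ has $\ell$ loops, $a$ arrow edges and $\gamma$ closed orbits, then $\sum_{\pi\supseteq\sigma}\alpha^{c(\pi)}=\alpha^{\gamma}(\alpha)_{n-\ell-a}$.

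For the straight case, admissible $\sigma$'s are in bijection with tilings of $[1,n]$ by three tile types: a free singleton (size $1$, weight $1$), a loop singleton (size $1$, weight $-\alpha$), and an arrow chain of length $s\ge 2$ (size $s$, weight $(-1)^{s-1}$). Letting $y$ mark non-loop tiles, the tile generating function is $T_y(x)=yx/(1+x)-\alpha x$, and $L(x,y)=1/(1-T_y(x))$ satisfies $[y^f]L(x,y)=x^f/[(1+x)^f(1+\alpha x)^{f+1}]$. Since $(\alpha)_f=\sum_{\bar\pi\in S_f}\alpha^{c(\bar\pi)}$, summing $(\alpha)_f\cdot[y^f]L(x,y)$ over $f$ produces \eqref{eq:straight_by_cycles} directly.

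For the ordinary case, admissible $\sigma$'s correspond to cyclic tilings of $C_n$ with the same three tile types, together with one exceptional configuration (all $n$ arrows, a single $\sigma$-cycle) that contributes $(-1)^n\alpha$ and sums to $\alpha x^2/(1+x)$. I would decompose cyclic tilings by what covers position $1$: a singleton, or an arrow chain of size $s$ containing position $1$ in any one of its $s$ slots, with the complementary arc of length $n-s$ tiled linearly. The identities $\sum_{s\ge 2}s(-1)^{s-1}x^s=-x^2(2+x)/(1+x)^2$ and $(1+x)^2-x(2+x)=1$ collapse the three cases to $[yx/(1+x)^2-\alpha x]L(x,y)$, yielding
\[
1+\sum_{n\ge 1}\sum_{j=1}^n D_n^j\alpha^j x^n \;=\; 1+\frac{\alpha x^2}{1+x}+\frac{xM^{*}(x)}{(1+x)^2}-\alpha xM(x),
\]
where $M(x)=\sum_f(\alpha)_f[y^f]L(x,y)$ and $M^{*}(x)=\sum_f(\alpha)_{f+1}[y^f]L(x,y)$.

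The main obstacle is then the algebraic reduction of this last expression to the compact right-hand side of \eqref{eq:ordinary_by_cycles}. Setting $F(u)=\sum_{g\ge 0}(\alpha)_g u^g$ with $u=x/[(1+x)(1+\alpha x)]$, one has $M=F(u)/(1+\alpha x)$, while the index-shift identity $\alpha F(u)+uF'(u)=(F(u)-1)/u$ (immediate from $(\alpha)_{g+1}=(\alpha+g)(\alpha)_g$) gives $M^{*}(x)=(F(u)-1)(1+x)/x$ and hence $xM^{*}(x)/(1+x)^2=(F(u)-1)/(1+x)$. Substituting and combining via the factorization $1-\alpha x^2=(1+\alpha x)-\alpha x(1+x)$ recasts the above as $(x+\alpha x^2)/(1+x)+(1-\alpha x^2)F(u)/[(1+x)(1+\alpha x)]$, matching \eqref{eq:ordinary_by_cycles}. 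The cyclic-placement factor of $s$ in the arrow-chain case and the separate bookkeeping of the full-cycle configuration are the spots where sign or boundary errors would most easily derail the calculation.
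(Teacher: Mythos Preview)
Your proof is correct. It carries out precisely the inclusion--exclusion argument that the paper mentions as an alternative (opening remark of Section~\ref{count_permutations_by_cycles}) but does not itself write out.

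The paper's own proof is organized differently. It introduces \emph{colored} permutations, marking a subset of the fixed points and (generalized) successions, and evaluates $\sum_\tau\alpha^{f(\tau)}(1+t)^{g(\tau)}(1+u)^{h(\tau)}x^{|\tau|}$ by a constructive bijection: insert identity blocks into the gaps of a base permutation and replace each point by a monotone nice bijection. Setting $t=u=-1$ then isolates the m\'enage permutations. For the ordinary case the paper splits colored permutations according to whether the generalized succession $\{n,1\}$ is colored (the sets $A_n$ and $B_n$) and handles the $A_n$ piece by a four-case analysis. At the level of ideas the two arguments coincide --- the paper's colored subset is exactly your partial placement $\sigma$, and its base permutation is your contracted permutation on $n-\ell-a$ points --- but the packaging differs. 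Your tiling encoding gives the tile generating function $T_y(x)=yx/(1+x)-\alpha x$ in one line and lets you read off $[y^f]L(x,y)$ directly; in the ordinary case your ``what covers position~$1$'' decomposition together with the separate full-cycle term $\alpha x^2/(1+x)$ replaces the paper's $A_n/B_n$ split, and the shift identity $\sum_f(\alpha)_{f+1}u^f=(F(u)-1)/u$ makes the final reduction shorter than the paper's case-by-case computation. Both routes must tacitly handle the $n=1$ degeneracy (where the loop and the generalized succession coincide); your choice to begin the full-cycle sum at $n\ge 2$ does this correctly.
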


\subsection{Outline}
We give some preliminary concepts and facts in Section
\ref{sec:preliminaries}. In Section
\ref{sec:reductions_and_nice_bijections}, we define three types of
reductions and the nice bijection. Then, we study the structure of
straight m\'enage permutations and prove
\eqref{eq:result_of_straight_menage_numbers} in Section
\ref{sec:straight_menage_permutation}. In Section
\ref{sec:ordinary_menage_permutation}, we study the structure of
ordinary m\'enage permutations and prove
\eqref{eq:result_of_ordinary_menage_numbers}. Finally, we count the
straight and ordinary m\'enage permutations by number of cycles and
prove \eqref{eq:straight_by_cycles} and
\eqref{eq:ordinary_by_cycles} in Section
\ref{count_permutations_by_cycles}.

\section{Preliminaries}\label{sec:preliminaries}

For $n\in\mathbb{N}$, we use $[n]$ to denote $\{1,\ldots,n\}$.
Define $[0]$ to be $\emptyset$.

\begin{definition}
Suppose $n>0$ and $\pi\in S_n$. If $\pi(i)=i+1$, then we call
$\{i,i+1\}$ a $succession$ of $\pi$. If $\pi(i)\equiv i+1$ (mod
$n$), then we call $\{i,\pi(i)\}$ a $generalized$ $succession$ of
$\pi$.
\end{definition}
\subsection{Partitions and Catalan numbers}\label{sec:partitions_and_Catalan_numbers}

Suppose $n>0$. A $partition$ $\epsilon$ of $[n]$ is a collection of
disjoint subsets of $[n]$ whose union is $[n]$. We call each subset
a $block$ of $\epsilon$. We also describe a partition as an
equivalence relation: $p\sim_\epsilon q$ if and only if $p$ and $q$
belong to a same block of $\epsilon$.

If a partition $\epsilon$ satisfies that for any $p\sim_\epsilon p'$
and $q\sim_\epsilon q'$, $p<q<p'<q'$ implies $p\sim_\epsilon q$;
then, we call $\epsilon$ a $noncrossing$ $partition$.

For $n\in\mathbb{N}$, suppose $\epsilon=\{V_1,\ldots,V_k\}$ is a
noncrossing partition of $[n]$ and $V_i=\{a_1^i,\ldots,
a_{j_i}^i\}$, where $a_1^i<\cdots<a_{j_i}^i$. Then, $\epsilon$
induces a permutation $\pi\in S_n$: $\pi(a_{r(i)}^i)=a_{r(i)+1}^i$
for $1\le r(i)\le j_i-1$ and $\pi(a_{j_i}^i)=a_1^i$. It is not
difficult to see that different noncrossing partitions induce
different permutations.

The following lemma is well known. See, for example, \cite{Stanley}.
\begin{lemma}
For $n\in\mathbb{N}$, there are $C_n$ noncrossing partitions of
$[n]$.
\end{lemma}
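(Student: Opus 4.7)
The plan is to show that the number $N_n$ of noncrossing partitions of $[n]$ (with $N_0 := 1$ for the empty partition) has generating function $N(x) = \sum_{n \geq 0} N_n x^n$ satisfying the Catalan functional equation $N(x) = 1 + xN(x)^2$. Since $c(x)$ is the unique formal power series with $c(0)=1$ satisfying this equation, this will force $N_n = C_n$.

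The key combinatorial input is a decomposition by the block containing $1$. Given a nonempty noncrossing partition $\epsilon$ of $[n]$, let $B = \{1 = a_1 < a_2 < \cdots < a_m\}$ be the block containing $1$, and for each $i \in \{1,\ldots,m\}$ set $G_i := \{a_i+1, a_i+2, \ldots, a_{i+1}-1\}$ with the convention $a_{m+1} := n+1$. I would first verify that the noncrossing condition forces every other block of $\epsilon$ to lie entirely inside one of the $G_i$: if some block $B'$ met both $G_i$ and $G_j$ with $i<j$, or straddled $a_i$, one would obtain elements $p<q<p'<q'$ with $p\sim_\epsilon p'$, $q\sim_\epsilon q'$, and $p \not\sim_\epsilon q$, contradicting the definition. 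Conversely, any choice of $B$ together with an independent noncrossing partition on each $G_i$ clearly yields a noncrossing partition of $[n]$, so the decomposition is a bijection.

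With the decomposition in hand, classifying partitions by $m = |B|$ and the block sizes $|G_i|$ gives
\[
N(x) - 1 \;=\; \sum_{m \geq 1} x^m N(x)^m \;=\; \frac{xN(x)}{1 - xN(x)},
\]
where the factor $x^m$ accounts for the $m$ elements of $B$ and $N(x)^m$ for the $m$ independent noncrossing partitions on the gaps. Clearing denominators yields $N(x) - x N(x)^2 = 1$, which is exactly the defining equation of $c(x)$. By uniqueness we conclude $N(x) = c(x)$ and therefore $N_n = C_n$ for every $n \in \mathbb{N}$.

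The routine part is the generating-function manipulation; the step that demands care is verifying that the block containing $1$ genuinely partitions the remaining elements into independent intervals, i.e., proving both directions of the claim that a partition of $[n]$ is noncrossing if and only if it decomposes as $B$ plus noncrossing partitions on the $G_i$. This is the only place where the noncrossing hypothesis is used in an essential way, and it is what makes the $m$-fold convolution in the recursion geometric rather than free.
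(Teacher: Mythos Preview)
Your argument is correct and is one of the standard proofs of this classical fact. Note, however, that the paper does not supply its own proof: it simply states the lemma as well known and cites Stanley's \emph{Enumerative Combinatorics}. So there is no ``paper's approach'' to compare against; you have filled in a gap that the authors deliberately left to the literature. The decomposition by the block containing $1$, the verification that the noncrossing condition confines every other block to a single gap $G_i$, and the resulting functional equation $N(x)=1+xN(x)^2$ are all handled cleanly in your write-up.
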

It is well known that the generating function of the Catalan numbers
is
$$c(x)=\sum\limits_{n=0}^\infty C_nx^n=\dfrac{1-\sqrt{1-4x}}{2x}.$$
It is also well known that one can define the Catalan numbers by
recurrence relation
\begin{align}\label{recurrence_of_Catalan_number}
C_{n+1}=\sum\limits_{k=0}^nC_kC_{n-k}
\end{align}
with initial condition $C_0=1$.
\begin{lemma}\label{thm:property_of_Catalan_numbers}
The generating function of the Catalan numbers $c(x)$ satisfies
$$c(x)=\frac{1}{1-xc(x)}=1+xc^2(x)\quad\text{and}\quad\dfrac{c^3(x)}{1-xc^2(x)}=c'(x).$$
\end{lemma}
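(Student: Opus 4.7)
The plan is to derive all three identities from the single algebraic relation $c(x) = 1 + x c(x)^2$. First I would observe that the Catalan recurrence \eqref{recurrence_of_Catalan_number} is precisely the statement that the coefficient of $x^n$ in $c(x)^2$ equals $C_{n+1}$; multiplying by $x$ and adding $C_0 = 1$ therefore gives $c(x) = 1 + x c(x)^2$, which is the second displayed equality. Rearranging this as $c(x) - x c(x)^2 = 1$, i.e., $c(x)(1 - xc(x)) = 1$, immediately yields the first equality $c(x) = \frac{1}{1 - xc(x)}$.

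For the derivative identity, I would differentiate $c(x) = 1 + x c(x)^2$ term-by-term in $x$ to obtain $c'(x) = c(x)^2 + 2x c(x) c'(x)$, and so $c'(x)\bigl(1 - 2x c(x)\bigr) = c(x)^2$, giving $c'(x) = \frac{c(x)^2}{1 - 2xc(x)}$. To match the stated form $\frac{c(x)^3}{1 - xc(x)^2}$, I would multiply numerator and denominator by $c(x)$ and verify that $c(x)(1 - 2xc(x)) = 1 - xc(x)^2$; this last identity is just the original quadratic rewritten, since $c(x) - 2xc(x)^2 = \bigl(1 + xc(x)^2\bigr) - 2xc(x)^2 = 1 - xc(x)^2$.

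I do not expect any genuine obstacle: the entire lemma reduces to formal manipulation of the defining quadratic for $c(x)$, with no convergence or combinatorial input needed beyond \eqref{recurrence_of_Catalan_number}. The only point requiring care is the final substitution in the derivative step, where the same relation $c(x) = 1 + xc(x)^2$ must be reused to convert the denominator $1 - 2xc(x)$ into $(1 - xc(x)^2)/c(x)$.
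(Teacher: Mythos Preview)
Your proposal is correct and follows essentially the same route as the paper: you differentiate the relation $c(x)=1+xc(x)^2$ to get $c'(x)=\dfrac{c(x)^2}{1-2xc(x)}$ and then use that same relation again to rewrite the denominator, exactly as the paper does. The only difference is that you spell out why $c(x)=1+xc(x)^2$ follows from \eqref{recurrence_of_Catalan_number}, whereas the paper simply cites this as well known.
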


\begin{proof}[Proof of Lemma \ref{thm:property_of_Catalan_numbers}]
The first formula is well known. By the first formula,
\begin{align}\label{www}
c'(x)=c^2(x)+2xc(x)c'(x)=\dfrac{c^2(x)}{1-2xc(x)}.
\end{align}
Thus, to prove the second formula, we only have to show that
$\dfrac{c(x)}{1-xc^2(x)}=\dfrac{1}{1-2xc(x)}$ which is equivalent to
$c(x)-2xc^2(x)=1-xc^2(x)$. This follows from the first formula.
\end{proof}

\subsection{Diagram representation of permutations}\label{sec:diagrams}
\subsubsection{Diagram of horizontal type}
For $n>0$ and $\pi\in S_n$, we use a diagram of horizontal type to
represent $\pi$. To do this, draw $n$ points on a horizontal line.
The points represent the numbers $1,\ldots,n$ from left to right.
For each $i\in[n]$, we draw a directed arc from $i$ to $\pi(i)$. The
permutation uniquely determines the diagram. For example, if
$\pi=(1,5,4)(2)(3)(6)$, then its diagram is

\centerline{\includegraphics[width=3.5in]{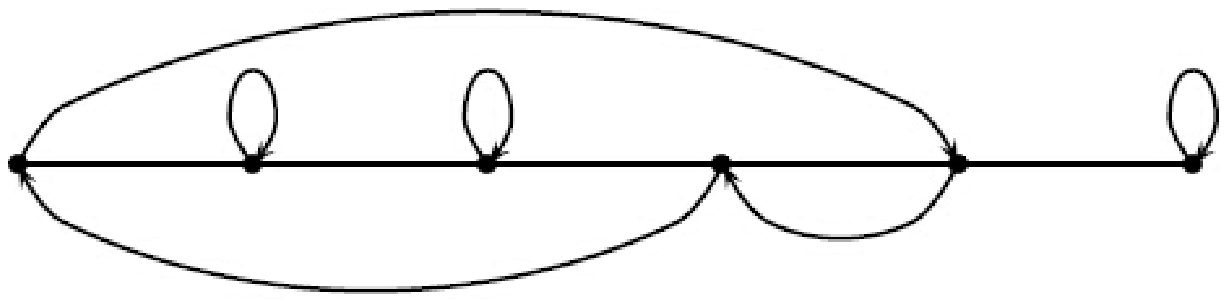}}

\subsubsection{Diagram of circular type}
For $n>0$ and $\pi\in S_n$, we also use a diagram of circular type
to represent $\pi$. To do this, draw $n$ points uniformly
distributed on a circle. Specify a point that represents the number
1. The other points represent $2,\ldots,n$ in counter-clockwise
order. For each $i$, draw a directed arc from $i$ to $\pi(i)$. The
permutation uniquely determines the diagram (up to rotation). For
example, if $\pi=(1,5,4)(2)(3)(6)$, then its diagram is

\centerline{\includegraphics[width=1.5in]{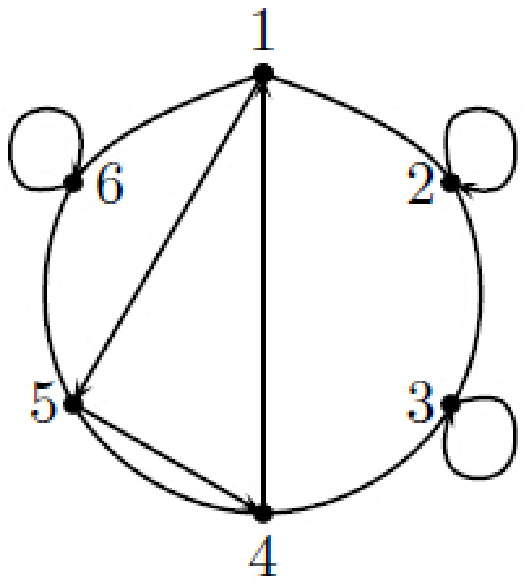}}

\subsection{The empty permutation}

The empty permutation $\pi_\emptyset\in S_0$ is a permutation with
no fixed points, no (generalized) successions and no cycles.

\section{Reductions and nice bijections}\label{sec:reductions_and_nice_bijections} In this section, we introduce reductions and nice
bijections which serve as our main tools to study m\'enage
permutations.
\subsection{Reduction of type 1}
Intuitively speaking, to perform a reduction of type 1 is to remove
a fixed point from a permutation. Suppose $n\ge1$, $\pi\in S_n$ and
$\pi(i)=i$. Define $\pi'\in S_{n-1}$ such that:
\begin{align}\label{eq:expression_of_reduction_of_type_1}
\pi'(j)=\begin{cases}\pi(j)&\text{if}\quad j<i\quad\text{and}\quad\pi(j)<i;\\
\pi(j)-1&\text{if}\quad
j<i\quad\text{and}\quad\pi(j)>i;\\\pi(j+1)&\text{if}\quad j\ge
i\quad\text{and}\quad\pi(j+1)<i;\\\pi(j+1)-1&\text{if}\quad j\ge
i\quad\text{and}\quad\pi(j+1)>i\end{cases}
\end{align}
when $n>1$. When $n=1$, define $\pi'$ to be $\pi_\emptyset$. If we
represent $\pi$ by a diagram (of either type), erase the point
corresponding to $i$ and the arc connected to the point (and number
other points appropriately for the circular case); then we obtain
the diagram of $\pi'$. We call this procedure of obtaining a new
permutation by removing a fixed point a $reduction$ $of$ $type$ $1$.
For example, if
$$\pi=(1,5,6,4)(2)(3)(7)\in S_7,$$ then by removing the fixed point 3
we obtain $\pi'=(1,4,5,3)(2)(6)\in S_6$.

\subsection{Reduction of type 2}
Intuitively speaking, to do a reduction of type 2 is to glue a
succession $\{k,k+1\}$ together. Suppose $n\ge2$, $\pi\in S_n$ and
$\pi(i)=i+1$. Define $\pi'\in S_{n-1}$ such that:
\begin{align}\label{eq:expression_of_reduction_of_type_2}
\pi'(j)=\begin{cases}\pi(j)&\text{if}\quad j<i\quad\text{and}\quad\pi(j)\le i;\\
\pi(j)-1&\text{if}\quad
j<i\quad\text{and}\quad\pi(j)>i+1;\\\pi(j+1)&\text{if}\quad j\ge
i\quad\text{and}\quad\pi(j+1)\le i;\\\pi(j+1)-1&\text{if}\quad j\ge
i\quad\text{and}\quad\pi(j+1)>i+1.\end{cases}
\end{align}
If we represent $\pi$ by the diagram of the $horizontal$ type, erase
the arc from $i$ to $i+1$, and glue the points corresponding to $i$
and $i+1$ together; then, we obtain the diagram of $\pi'$. We call
this procedure of obtaining a new permutation by gluing a succession
together a $reduction$ $of$ $type$ $2$. For example, if
$$\pi=(1,5,6,4)(2)(3)(7)\in S_7,$$ then by gluing 5 and 6 together,
we obtain $\pi'=(1,5,4)(2)(3)(6)\in S_6$.

\subsection{Reduction of type 3}
Intuitively speaking, to perform a reduction of type 3 is to glue a
generalized succession $\{k,k+1\pmod{n}\}$ together. Suppose
$n\ge1$, $\pi\in S_n$ and $\pi(i)\equiv i+1\pmod{n}$. Define $\pi'$
to be the same as in \eqref{eq:expression_of_reduction_of_type_2}
when $i\ne n$. When $i=n>1$, define $\pi'$ to be
\begin{align*}
\pi'(j)=\begin{cases}\pi(j)&\text{ if }j\ne\pi^{-1}(n);\\1&\text{ if
}j=\pi^{-1}(n).\end{cases}
\end{align*}
When $i=n=1$, define $\pi'$ to be $\pi_\emptyset$. If we represent
$\pi$ by a diagram of the $circular$ type, erase the arc
 from $i$ to $i+1$ (mod $n$), glue the points corresponding to $i$
and $i+1$ (mod $n$) together and number the points appropriately;
then, we obtain the diagram of $\pi'$. We call this procedure of
obtaining a new permutation by gluing a generalized succession
together a $reduction$ $of$ $type$ $3$. For example, if
$$\pi=(1,5,6,7)(2)(3)(4)\in S_7,$$ then by gluing 1 and 7 together,
we obtain $\pi'=(1,5,6)(2)(3)(4)\in S_6$.

\subsection{Nice bijections}\label{sec:nice_bijection}

Suppose $n\ge1$ and $f$ is a bijection from $[n]$ to $\{2,\ldots,
n+1\}$. We can also represent $f$ by a diagram of horizontal type as
for permutations. The bijection uniquely determines the diagram. If
$f$ has a fixed point or there exists $i$ such that $f(i)=i+1$, then
we can also perform reductions of type 1 or type 2 on $f$ as above.
In the latter case, we also call $\{i,i+1\}$ a $succession$ of $f$.
We can reduce $f$ to a bijection with no fixed points and no
successions by a series of reductions. It is easy to see that the
resulting bijection does not depend on the order of the reductions.

The following diagram shows an example of reduction of type 2 on the
bijection by gluing the succession 2 and 3 together.

\centerline{\includegraphics[width=4in]{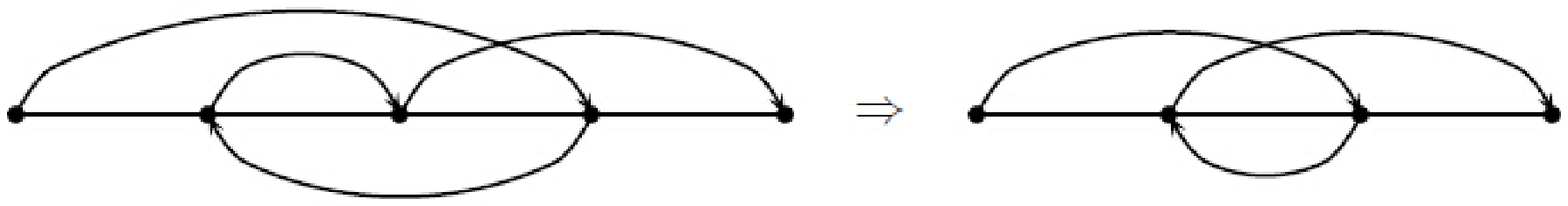}}
\begin{definition}
Suppose $f$ is a bijection from $[n]$ to $\{2,\ldots, n+1\}$. If
there exist a series of reductions of type 1 or type 2 by which one
can reduce $f$ to the simplest bijection $1\mapsto2$, then we call
$f$ a $nice$ $bijection$.
\end{definition}
Suppose $\pi\in S_n$ and $p$ is a point of $\pi$. We can replace $p$
by a bijection $f$ from $[k]$ to $\{2,\ldots, k+1\}$ and obtain a
new permutation $\pi'\in S_{n+k}$ by the following steps:

(1) represent $\pi$ by the horizontal diagram;

(2) add a point $q$ right before $p$ and add an arc from $q$ to $p$
;

(3) replace the arc from $\pi^{-1}(p)$ to $p$ by an arc from
$\pi^{-1}(p)$ to $q$ ;

(4) replace the arc from $q$ to $p$ by the diagram of $f$.

For example, if $\pi$, $p$ and $f$ are as below,

\centerline{\includegraphics[width=4in]{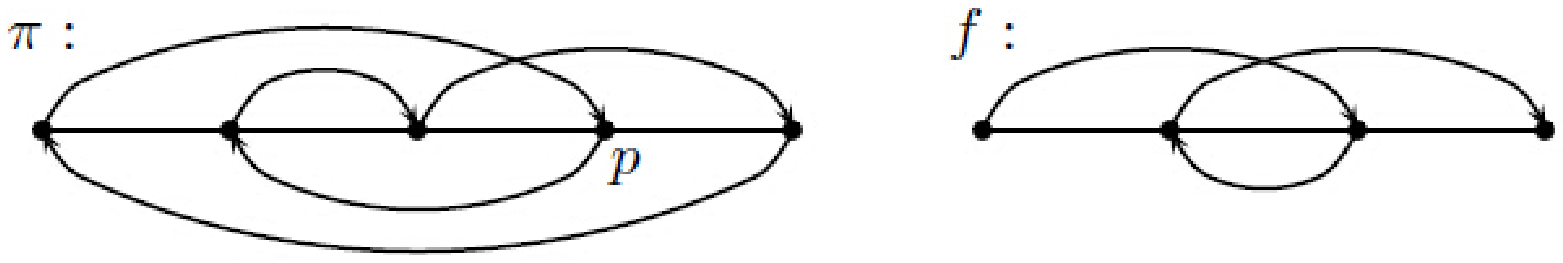}}

then, we can replace $p$ by $f$ and obtain the following
permutation:

\centerline{\includegraphics[width=3.5in]{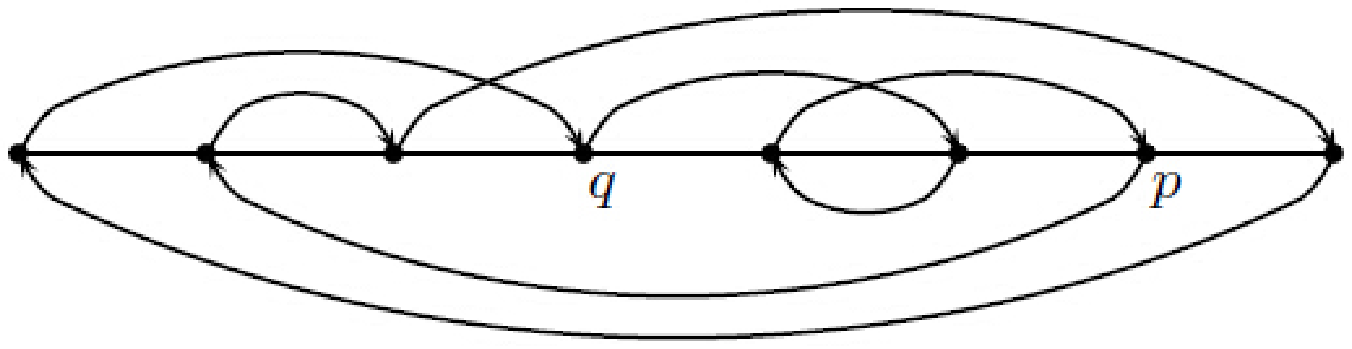}}

It is easy to see, by the definition of the nice bijection, that if
$f$ is a nice bijection, then we can reduce $\pi'$ back to $\pi$:
first, we can reduce $\pi'$ to the permutation obtained in Step (3)
because $f$ is nice; then, by gluing the succession $p$ and $q$
together, we obtain $\pi$. Notice that the bijection $f$ in the
above example is not nice.

For the circular diagram of a permutation $\pi$, we can also use
Steps (2)--(4) shown above to obtain a new diagram. However, to
obtain a permutation $\pi'$, we need to specify the point
representing number 1 in the new diagram. See the example in the
proof of Theorem \ref{gf_of_r}.

\begin{lemma}\label{lemma:count_nice_bijection}
For $n\ge2$, let $a_n$ be the number of nice bijections from $[n-1]$
to $\{2,\ldots, n\}$. Define $a_1=1$. The generating function of
$a_n$ satisfies the following equation:
\[
g(x):=a_1x+a_2x^2+a_3x^3+\cdots=xc(x)
\]
where $c(x)=\sum\limits_{n=0}^\infty C_kx^k$ is the generating
function of the Catalan numbers.
\end{lemma}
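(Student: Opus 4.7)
The plan is to set up a Catalan-type recurrence for $a_n$ by decomposing a nice bijection according to the value $k = f(1)$, and then to solve the resulting generating-function equation using Lemma \ref{thm:property_of_Catalan_numbers}.

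Concretely, for $n \ge 2$, let $f \colon [n-1] \to \{2,\ldots,n\}$ be a nice bijection and set $k = f(1) \in \{2,\ldots,n\}$. The key structural claim is that, in the horizontal diagram of $f$, the arc $\{1,k\}$ cannot be crossed by any other arc of $f$. Granting this, $f$ splits into an \emph{inner} piece, namely $f$ restricted to $\{2,\ldots,k-1\}$, which is a permutation of $\{2,\ldots,k-1\}$ that can itself be fully reduced by type 1 and type 2 operations, together with an \emph{outer} piece, $f$ restricted to $\{k,\ldots,n-1\}$, which after the shift $i \mapsto i-k+1$ becomes a nice bijection from $[n-k]$ to $\{2,\ldots,n-k+1\}$. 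Identifying fully reducible permutations of a $(k-2)$-element set with noncrossing partitions (as in Section \ref{sec:partitions_and_Catalan_numbers}), the inner piece contributes $C_{k-2}$ choices and the outer piece contributes $a_{n-k+1}$, giving
\[
a_n = \sum_{k=2}^n C_{k-2}\, a_{n-k+1} \qquad (n \ge 2), \quad a_1 = 1.
\]

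Translating into generating functions, this reads $g(x) - x = x\,c(x)\,g(x)$, hence $g(x) = x/(1 - xc(x))$. Lemma \ref{thm:property_of_Catalan_numbers} identifies $1/(1-xc(x)) = c(x)$, so $g(x) = xc(x)$, as claimed.

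The main obstacle is justifying the non-crossing claim and the associated decomposition. I would argue by induction on the length of the reduction sequence: reductions of type 1 and type 2 are purely local (a self-loop is erased, or a succession $\{i,i+1\}$ is contracted to a single point), so neither can create a crossing; since the terminal bijection $1\mapsto 2$ is trivially non-crossing, every nice bijection is non-crossing, and no arc can straddle the endpoints of $\{1,k\}$. For the enumeration of the inner piece I would further show that a permutation of an $m$-element set is fully reducible by type 1 and type 2 operations if and only if it is the permutation induced by a noncrossing partition of that set; this follows by the same ``innermost arc'' induction, since any non-trivial noncrossing permutation must contain either a fixed point or an innermost succession and the corresponding reduction preserves noncrossingness. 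Once these two facts are in place, the counting recurrence and the generating-function manipulation are straightforward.
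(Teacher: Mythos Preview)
Your approach is essentially the paper's: decompose a nice bijection by $k=f(1)$, show the arc $\{1,k\}$ is not crossed, split into an inner permutation on $\{2,\dots,k-1\}$ (reducible to $\pi_\emptyset$, hence counted by $C_{k-2}$ via Lemma~\ref{lemma:a_permutation_can_be_reduced_to_null_iff...}) and an outer nice bijection on $\{k,\dots,n-1\}$, derive $a_n=\sum_{k=2}^n C_{k-2}\,a_{n-k+1}$, and conclude $g(x)=xc(x)$. The paper finishes by noting that this recurrence with $a_1=C_0=1$ forces $a_{n+1}=C_n$; your generating-function route $g-x=xc(x)\,g$, hence $g=x/(1-xc(x))=xc(x)$, is an equally valid variant.

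One genuine slip: your justification of the non-crossing claim runs in the wrong direction. Saying that reductions cannot \emph{create} a crossing and that the terminal bijection $1\mapsto 2$ is non-crossing tells you nothing about the starting bijection $f$; a priori a crossing in $f$ might have been destroyed along the way. What you need is that reductions cannot \emph{destroy} a crossing. The paper argues exactly this (for the specific arc $\{1,k\}$): if $1<p<k$ but $f(p)=q>k$, then neither $\{1,k\}$ nor $\{p,q\}$ is a fixed point or a succession, so any single reduction of type~1 or~2 leaves relabelled copies of both arcs present and still in the pattern $1<p'<k'<q'$; by induction one never reaches a diagram with a single arc, contradicting niceness. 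With that direction fixed, your argument goes through.
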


\begin{proof}[Proof of Lemma \ref{lemma:count_nice_bijection}]
Suppose $n\ge2$ and $f$ is a nice bijection from $[n-1]$ to
$\{2,\ldots, n\}$ such that $f(1)=k$.

Suppose $p<k$. We claim that $f(p)<k$. If not, suppose $q=f(p)>k$.
Then, neither $\{1,k\}$ nor $\{p,q\}$ is a succession. Consider the
horizontal diagram of $f$. If we perform a reduction of type 1 or
type 2 on $f$, then the arc we remove is neither the arc from $1$ to
$k$ nor the arc from $p$ to $q$. By induction, no matter how many
reductions we perform, there always exists an arc from $1$ to $k'$
and an arc from $p'$ to $q'$ such that $p'<k'<q'$. In other words,
we can never reduce the bijection to $1\mapsto2$. Thus, we proved
the claim.

Thus, the image of $\{2,\ldots, k-1\}$ under $f$ must be
$\{2,\ldots, k-1\}$, and therefore, the image of $\{k,\ldots,n-1\}$
under $f$ must be $\{k+1,\ldots,n\}$.

This implies that $f|_{\{2,\ldots, k-1\}}$ has the same diagram as a
permutation $\tau\in S_{k-2}$ and we can reduce $\tau$ to
$\pi_\emptyset$. This also implies that $f|_{\{k,\ldots, n-1\}}$ has
the same diagram as a nice bijection
 from $[n-k]$ to $\{2,\ldots, n-k+1\}$. By Lemma
\ref{lemma:a_permutation_can_be_reduced_to_null_iff...}, the number
of nice bijections from $[n-1]$ to $\{2,\ldots, n\}$ such that
$f(1)=k$ is $C_{k-2}a_{n-k+1}$. Letting $k$ vary, we obtain
$a_n=\sum\limits_{k=2}^nC_{k-2}a_{n-k+1}$.

By \eqref{recurrence_of_Catalan_number}, $(C_n)_{n\ge0}$ and
$(a_{n+1})_{n\ge0}$ have the same recurrence relation and the same
initial condition $C_0=a_1=1$, so $C_n=a_{n+1}(n\ge0)$. Therefore
$g(x)=xc(x)$.
\end{proof}

\section{Structure of straight m\'enage permutations}\label{sec:straight_menage_permutation} In Section \ref{sec:straight_menage_permutation}, when
we mention a reduction, we mean a reduction of \textbf{type 1 or
type 2}.

If a permutation $\pi$ is not a straight m\'enage permutation, then
$\pi$ has at least one fixed point or succession. Thus, we can apply
a reduction to $\pi$. By induction, we can reduce $\pi$ to a
straight m\'enage permutation $\pi'$ by a series of reductions. For
example, we can reduce $\pi_1=(1,3)(2)(4,5,6)$ to $\pi_\emptyset$:
$(1,3)(2)(4,5,6)\to(1,2)(3,4,5)\to(1)(2,3,4)\to(1,2,3)\to(1,2)\to(1)\to\pi_\emptyset$.
We can reduce $\pi_2=(1,5,4)(2)(3)(6)\in S_6$ to $(1,3,2)$:
$(1,5,4)(2)(3)(6)\to(1,5,4)(2)(3)\to(1,4,3)(2)\to(1,3,2)$. It is
easy to see that the resulting straight m\'enage permutation does
not depend on the order of the reductions. Recall that we defined
the permutation induced from a noncrossing partition in Section
\ref{sec:partitions_and_Catalan_numbers}.
\begin{lemma}\label{lemma:a_permutation_can_be_reduced_to_null_iff...}
Suppose $\pi\in S_n$. We can reduce $\pi$ to $\pi_\emptyset$ by
reductions of type 1 and type 2 if and only if there is a
noncrossing partition inducing $\pi$. In particular, there are $C_n$
such permutations in $S_n$.
\end{lemma}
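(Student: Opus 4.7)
The plan is to establish a bijection between noncrossing partitions of $[n]$ and permutations $\pi\in S_n$ that reduce to $\pi_\emptyset$ (via reductions of type 1 and type 2), proceeding by induction on $n$ with the trivial base $n=0$. Because distinct noncrossing partitions induce distinct permutations (as noted in Section~\ref{sec:partitions_and_Catalan_numbers}) and there are $C_n$ noncrossing partitions of $[n]$, the count $C_n$ will follow automatically once the bijection is in place.

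For the ``if'' direction, suppose $\pi$ is induced by a noncrossing partition $\epsilon$ of $[n]$ with $n\ge1$. The key observation is that every nonempty noncrossing partition contains an \emph{interval block}, i.e., a block of the form $\{a,a+1,\ldots,a+k\}$: choose a block $B$ minimizing $\max B-\min B$; any integer strictly between $\min B$ and $\max B$ that fails to lie in $B$ must sit in another block, and the noncrossing condition forces that block to be contained in the open interval $(\min B,\max B)$, contradicting the minimality of $B$. The cycle of $\pi$ associated to such an interval block is $(a,a+1,\ldots,a+k)$, so we may perform $k$ reductions of type 2 (collapsing the successions $\{a,a+1\},\ldots,\{a+k-1,a+k\}$) followed by one reduction of type 1 (removing the resulting fixed point). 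The remaining permutation on a set of size $n-k-1$ is induced by the noncrossing partition obtained from $\epsilon$ by deleting the interval block and relabeling, and the inductive hypothesis finishes the direction.

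For the ``only if'' direction, suppose $\pi\in S_n$ reduces to $\pi_\emptyset$ with $n\ge1$. Pick any applicable reduction and let $\pi'\in S_{n-1}$ be the result; since $\pi'$ is still reducible, by induction it is induced by some noncrossing partition $\epsilon'$. I then reverse the reduction at the partition level. If a fixed point $\pi(i)=i$ was removed, I shift every element of $\epsilon'$ that is $\ge i$ up by one and adjoin the singleton block $\{i\}$; the result is obviously noncrossing and induces $\pi$. If a succession $\pi(i)=i+1$ was glued, I shift every element of $\epsilon'$ that is $>i$ up by one and insert $i+1$ into the block of $\epsilon'$ containing $i$, placing it immediately after $i$ in the block's sorted listing. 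A direct comparison with the definition of the induced permutation shows that this $\epsilon$ does induce $\pi$.

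The main obstacle is verifying that $\epsilon$ is noncrossing in the succession case. Assume for contradiction that blocks $B_1\ne B_2$ of $\epsilon$ form a crossing $p<q<p'<q'$ with $p,p'\in B_1$ and $q,q'\in B_2$. Since $i+1$ lies in the block containing $i$ (say $B_1$), the values $q$ and $q'$ cannot equal $i+1$, so the only possibilities are $p=i+1$, $p'=i+1$, or $i+1\notin\{p,q,p',q'\}$. In the last case, reversing the shift produces the same crossing in $\epsilon'$. If $p=i+1$, then $q,p',q'>i+1$, and in $\epsilon'$ the quadruple $(i,q-1,p'-1,q'-1)$ exhibits a crossing of the (shift-reversed) blocks $B_1\setminus\{i+1\}$ and $B_2$. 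If $p'=i+1$, then $p,q\le i$ (and $q<i$ because $q\in B_2$ while $i\in B_1$) and $q'>i+1$, so in $\epsilon'$ the quadruple $(p,q,i,q'-1)$ is a crossing. All three cases contradict the noncrossingness of $\epsilon'$, so $\epsilon$ must be noncrossing, completing the bijection.
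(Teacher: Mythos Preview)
Your proof is correct and follows essentially the same inductive strategy as the paper's proof: both directions are handled by induction on $n$, producing or consuming one reduction at a time and tracking the corresponding change on the noncrossing partition. The only cosmetic differences are that in the ``if'' direction you locate an entire interval block (via the minimal-range argument) and collapse it in one sweep, whereas the paper locates a block whose interval is minimal under containment and performs a single reduction before recursing; and in the ``only if'' direction you spell out the noncrossingness verification for $\Pi_2$ explicitly, while the paper simply asserts it is ``not difficult to check.''
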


\begin{proof}
$\Rightarrow$:  Suppose we can reduce $\pi\in S_n$ to
$\pi_\emptyset$. Then, $\pi$ has at least one fixed point or
succession. We use induction on $n$. If $n=1$, the conclusion is
trivial. Suppose $n>1$.

If $\pi$ has a fixed point $i$, then by reduction of type 1 on $i$
we obtain $\pi'$ satisfying
\eqref{eq:expression_of_reduction_of_type_1}. By induction
assumption, there is a noncrossing partition
$\Phi=\{V_1,\ldots,V_k\}$ inducing $\pi'$. Now, we define a new
noncrossing partition $\Pi_1(\Phi,i)$ as follows. Set
$$\tilde V_r=\{x+1|x\in V_r\text{ and }x\ge
i\}\cup\{x|x\in V_r\text{ and }x<i\}$$ for $1\le r\le k$ and $\tilde
V_{k+1}=\{i\}$. Define $\Pi_1(\Phi,i)=\{\tilde V_1,\ldots,\tilde
V_{k+1}\}$. It is not difficult to check that $\Pi_1(\Phi,i)$ is a
noncrossing partition inducing $\pi$.

If $\pi$ has a succession $\{i,i+1\}$, then by reduction of type 2
on $\{i,i+1\}$, we obtain $\pi''$ satisfying
\eqref{eq:expression_of_reduction_of_type_2}. By induction
assumption, there is a noncrossing partition
$\Phi=\{U_1,\ldots,U_s\}$ inducing $\pi''$. Now, we define a new
noncrossing partition $\Pi_2(\Phi,i)$ as follows. Set
\begin{align*}
\tilde U_t=\begin{cases}\{x+1|x\in U_t\text{ and }x> i\}\cup\{x|x\in
U_t\text{ and }x<i\}&\text{ if }t\ne t_0;\\\{x+1|x\in U_t\text{ and
}x> i\}\cup\{x|x\in U_t\text{ and }x<i\}\cup\{i,i+1\}&\text{ if
}t=t_0.\end{cases}
\end{align*}
Define $\Pi_2(\Phi,i)=\{\tilde U_1,\ldots,\tilde U_s\}$. It is not
difficult to check that $\Pi_2(\Phi,i)$ is a noncrossing partition
inducing $\pi$.

$\Leftarrow$: Suppose there is a noncrossing partition
$\{V_1,\ldots,V_k\}$ inducing $\pi$ where $V_r=\{a_1^r,\ldots,
a_{j_r}^r\}$ and $a_1^r<\cdots<a_{j_r}^r$. We prove by using
induction on $n$. The case $n=1$ is trivial. Suppose $n>1$. For
$r_1\ne r_2$, if
$[a_1^{r_1},a_{j_{r_1}}^{r_1}]\cap[a_1^{r_2},a_{j_{r_2}}^{r_2}]\ne\emptyset$,
then either
$[a_1^{r_1},a_{j_{r_1}}^{r_1}]\subset[a_1^{r_2},a_{j_{r_2}}^{r_2}]$
or
$[a_1^{r_2},a_{j_{r_2}}^{r_2}]\subset[a_1^{r_1},a_{j_{r_1}}^{r_1}]$;
otherwise, the partition cannot be noncrossing. Thus, there exists
$p$ such that $[a_1^p,a_{j_p}^p]\cap[a_1^q,a_{j_q}^q]=\emptyset$ for
all $q\ne p$. If $j_p=1$, then $a_1^p$ is a fixed point of $\pi$. If
$j_p>1$, then $\{a_1^p,a_2^p\}$ is a succession of $\pi$.

For the case $j_p=1$, perform a reduction of type 1 on $a_1^p$ and
obtain $\pi'\in S_{n-1}$. Then, $\{\tilde V_r|r\ne p\}$ is a
noncrossing partition inducing $\pi'$, where
\begin{align}\label{eq:tilde_V_r}
\tilde V_r=\{x-1|x\in V_r\text{ and }x> a_{1}^p\}\cup\{x|x\in
V_r\text{ and }x<a_{1}^p\}.
\end{align}
By induction assumption, we can reduce $\pi'$ to $\pi_\emptyset$;
then, we can also reduce $\pi$ to $\pi_\emptyset$.

For the case $j_p>1$, perform a reduction of type 2 on
$\{a_1^p,a_2^p\}$ and get $\pi'\in S_{n-1}$. Then, $\{\tilde
V_r|1\le r\le k\}$ is a noncrossing partition inducing $\pi'$, where
$\tilde V_r$ is the same as in \eqref{eq:tilde_V_r}. By induction
assumption, we can reduce $\pi'$ to $\pi_\emptyset$; then, we can
also reduce $\pi$ to $\pi_\emptyset$.
\end{proof}

Conversely, for a given straight m\'enage permutation $\pi\in S_m$,
what is the cardinality of the set
\begin{align}\label{set:permutations_which_can_be_reduced_to_a_given_permutation}
\{\tau\in S_{m+n}\big|\text{we can reduce }\tau\text{ to }\pi\text{
by reductions of type 1 and type 2}\}?
\end{align}
Interestingly the answer only depends on $m$ and $n$; it does not
depend on the choice of $\pi$. In fact, we have
\begin{theorem}\label{gf_of_omega}
Suppose $m\ge0$ and $\pi\in S_m$ is a straight m\'enage permutation.
Suppose $n\ge0$ and $w_m^n$ is the cardinality of the set in
\eqref{set:permutations_which_can_be_reduced_to_a_given_permutation}.
Set $W_m(x)=w_m^0+w_m^1x+w_m^2x^2+w_m^3x^3+\cdots$; then,
\[
W_m(x)=c(x)^{2m+1}.
\]
\end{theorem}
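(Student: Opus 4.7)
The theorem claims $W_m(x) = c(x)^{2m+1}$, independent of the particular straight ménage permutation $\pi \in S_m$ chosen. My plan is to realize this as a product of $2m+1$ independent Catalan factors, one for each \emph{slot} of the horizontal diagram of $\pi$, namely the $m$ points of $\pi$ and the $m+1$ gaps before, between, and after them. Concretely, I will set up a bijection between $\{\tau \in S_{m+n} : \tau \text{ reduces to } \pi\}$ and the set of \emph{decorations} of $\pi$ consisting of a nice bijection $f_j$ (possibly the trivial one of size $0$, corresponding to the convention $a_1=1$) at each point $p_j$ of $\pi$, together with a permutation $\sigma_i$ reducing to $\pi_\emptyset$ at each of the $m+1$ gaps, with combined extra size $n$. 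The forward map builds $\tau$ by placing the blocks left to right in the order $\sigma_0$, (nice-bijection substructure of $p_1$ ending at $p_1$), $\sigma_1$, \ldots, (nice-bijection substructure of $p_m$), $\sigma_m$, and then installing the arcs of $\pi$ between the $\pi$-points (each arc $p_j \to \pi(p_j)$ redirected to the leftmost point of $\pi(p_j)$'s block, which is precisely the substitution procedure of Section~\ref{sec:nice_bijection} applied at each $p_j$). Since every $f_j$ reduces to $1 \mapsto 2$ and every $\sigma_i$ reduces to $\pi_\emptyset$, applying these reductions in any order returns $\pi$.

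For the inverse direction, I extract the decoration from $\tau$ by first identifying the $m$ points of $\pi$ as the survivors of a complete reduction of $\tau$, a well-defined set by the order-independence noted before Lemma~\ref{lemma:a_permutation_can_be_reduced_to_null_iff...}. Between any two consecutive survivors, the ``extras'' must split into a null-reducing prefix and a suffix ending at the next survivor that forms a nice bijection; the split point is determined by the unique extra whose incoming $\tau$-arc originates from outside its region, or by the next survivor itself if the nice bijection at that point is trivial. The main obstacle is checking well-definedness of this split: one must show that exactly one point in each region carries an external incoming arc, and that the block from that point up to the next survivor really is a nice bijection when regarded on its own. Both facts will follow from the observation that in the forward construction the only arcs crossing block boundaries are the $\pi$-arcs (possibly redirected through some $q$), while the null-reducing-permutation blocks are entirely self-contained; I expect to prove uniqueness by induction on $n$, peeling off one reduction at a time and using the invariance of the decomposition under a choice of first reduction.

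Given the bijection, the counting is immediate. By Lemma~\ref{lemma:count_nice_bijection} the generating function for a nice bijection at a single point is $\sum_{k\ge 0} a_{k+1}\,x^k = g(x)/x = c(x)$; by Lemma~\ref{lemma:a_permutation_can_be_reduced_to_null_iff...} the generating function for a permutation reducing to $\pi_\emptyset$ at a single gap is $\sum_{l\ge 0} C_l\,x^l = c(x)$. Multiplying the $m$ point-factors and the $m+1$ gap-factors, all of which are independent by the bijection, yields $W_m(x) = c(x)^m \cdot c(x)^{m+1} = c(x)^{2m+1}$, as claimed, and this is manifestly independent of the choice of straight ménage permutation $\pi$.
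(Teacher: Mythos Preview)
Your approach is essentially the same as the paper's: the paper likewise decomposes each $\tau$ reducing to $\pi$ into $m+1$ ``gap'' insertions (permutations reducible to $\pi_\emptyset$, counted by $C_d$ via Lemma~\ref{lemma:a_permutation_can_be_reduced_to_null_iff...}) and $m$ ``point'' replacements by nice bijections (counted by $a_{r+1}$ via Lemma~\ref{lemma:count_nice_bijection}), and then verifies that this construction hits every element of $A$ by the same induction on $n$ that you anticipate. The only cosmetic difference is that the paper argues surjectivity of the forward map inductively rather than describing the inverse map directly, but the content is identical.
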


\begin{proof}[Proof of Theorem \ref{gf_of_omega}]
If $m=0$, then $\pi=\pi_\emptyset$, and the conclusion follows from
Lemma \ref{lemma:a_permutation_can_be_reduced_to_null_iff...}. Thus,
we only consider the case that $m>0$.

Obviously, $w_m^0=1$. Now, suppose $n\ge1$.

Represent $\pi$ by a horizontal diagram. The diagram has $m+1$ gaps:
one gap before the first point, one gap after the last point and one
gap between each pair of adjacent points.

Let $A$ be the set in
\eqref{set:permutations_which_can_be_reduced_to_a_given_permutation}.
To obtain a permutation in $A$, we add points to $\pi$ in the
following two ways:
\begin{enumerate}
\item[(a)] add a permutation induced by a noncrossing partition $\Phi_p$ of
$[d_p]$ into the $p$th gap of $\pi$, where $1\le p\le m+1$ and
$d_p\ge0$ ($d_p=0$ means that we add nothing into the $p$th gap);
\item[(b)] replace the $q$th point of $\pi$ by a nice bijection $f_q$ from
$[r_q]$ to $\{2,\ldots,r_q+1\}$, where $1\le q\le m$ and $r_q\ge0$
($r_q=0$ means that we do not change the $q$th point).
\end{enumerate}
For example, if $\pi$ and $f$ are as below,

\centerline{\includegraphics[width=4in]{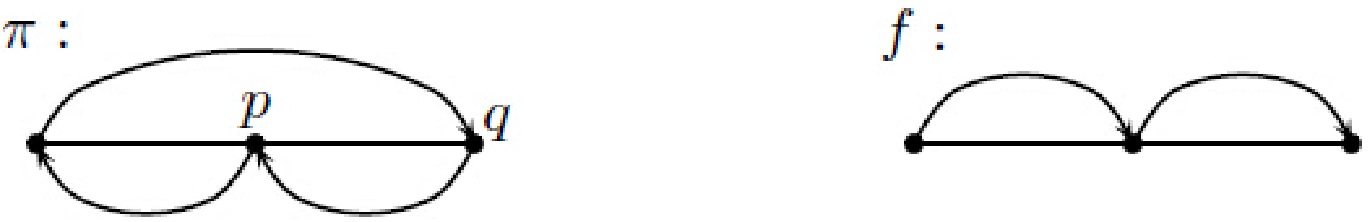}}

then we can add the permutation $(1,2)$ between $p$ and $q$, add the
permutation $(1)(2,3)$ after the last point and replace $p$ by $f$.
Then, we obtain a permutation in $A$ that is:

\centerline{\includegraphics[width=4in]{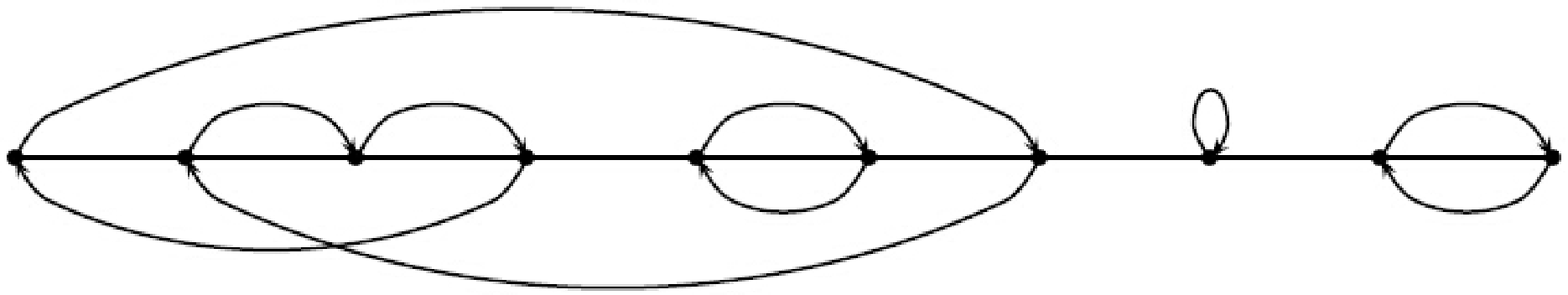}}

\textbf{Statement: The set of permutations constructed from (a) and
(b) equals $A$.} It is easy to see that we can reduce a permutation
constructed through (a) and (b) to $\pi$. Conversely, suppose we can
reduce $\pi'\in S_{m+n}$ to $\pi$. Now, we show that one can
construct $\pi'$ through (a) and (b). Use induction on $n$. The case
that $n=0$ is trivial. Suppose $n>0$. Then, $\pi'$ has at least one
fixed point or succession. For a nice bijection $f$ from $[s]$ to
$\{2,\ldots,s+1\}$ and $1<w_1<s+1$, $1\le w_2\le s+1$, define nice
bijections $B_1(f,w_1)$ and $B_2(f,w_2)$ as
\begin{align}\label{eq:B_1(f,w_1)}
B_1(f,w_1)=\begin{cases}f(x)&\text{ if }x<w_1\text{ and }
f(x)<w_1;\\f(x)+1&\text{ if }x<w_1\text{ and } f(x)\ge
w_1;\\f(x-1)&\text{ if }x>w_1\text{ and } f(x)<w_1;\\f(x-1)+1&\text{
if }x>w_1\text{ and } f(x)\ge w_1;\\w_1&\text{ if }x=w_1\end{cases}
\end{align}
\begin{align}
B_2(f,w_2)=\begin{cases}f(x)&\text{ if }x<w_2\text{ and } f(x)\le
w_2;\\f(x)+1&\text{ if }x<w_2\text{ and } f(x)>w_2;\\f(x-1)&\text{
if }x>w_2\text{ and } f(x)\le w_2;\\f(x-1)+1&\text{ if }x>w_2\text{
and } f(x)>w_2;\\w_2+1&\text{ if
}x=w_2.\end{cases}\label{eq:B_2(f,w_2)}
\end{align}
We can reduce $B_1(f,w_1)$ to $f$ by a reduction of type 1 on the
fixed point $w_1$. We can reduce $B_2(f,w_2)$ to $f$ by a reduction
of type 2 on the succession $\{w_2,w_2+1\}$.

\textbf{Case 1: $\pi'$ has a fixed point $i$.} Using a reduction of
type 1 on $i$, we obtain $\pi''\in S_{m+n-1}$. By induction
assumption, we can construct $\pi''$ from $\pi$ by (a) and (b).
According to the value of $i$, there is either a $k$ such that
\begin{align}\label{eq:condition_of_k}
1\le i-\sum_{j<k}(d_j+r_j+1)\le d_k+1
\end{align}
or a $k'$ such that
\begin{align}\label{eq:condition_of_k'}
1<i-(\sum_{j<k'}(d_j+r_j+1)+d_{k'})\le r_{k'}+1.
\end{align}
If there is a $k$ such that \eqref{eq:condition_of_k} holds, then we
can construct $\pi'$ from $\pi$ by (a) and (b), except that we add
the permutation induced by $\Pi_1(\Phi_k,i-\sum_{j<k}(d_j+r_j+1))$
instead of $\Phi_k$ into the $k$th gap, where $\Pi_1$ is the same as
in the proof of Lemma
\ref{lemma:a_permutation_can_be_reduced_to_null_iff...}. Conversely,
if there exists a $k'$ such that \eqref{eq:condition_of_k'} holds,
then we can construct $\pi'$
 from $\pi$ by (a) and (b), except that we replace the $k'$th
point of $\pi$ by $B_1(f_{k'},i-\sum_{j<k'}(d_j+r_j+1))$ instead of
$f_{k'}$, where $B_1$ is the same as in \eqref{eq:B_1(f,w_1)}.

\textbf{Case 2: $\pi'$ has a succession $\{i,i+1\}$.} By reduction
of type 2 on $\{i,i+1\}$ we obtain $\pi''\in S_{m+n-1}$. By
induction assumption, we can construct $\pi''$ from $\pi$ by (a) and
(b). According to the value of $i$, there is either a $k$ such that
\begin{align}\label{eq:condition_of_k_for_succession}
0<i-\sum_{j<k}(d_j+r_j+1)\le d_k
\end{align}
or a $k'$ such that
\begin{align}\label{eq:condition_of_k'_for_succession}
0<i-(\sum_{j<k'}(d_j+r_j+1)+d_{k'})\le r_{k'}+1.
\end{align}
If there is a $k$ such that \eqref{eq:condition_of_k_for_succession}
holds, then we can construct $\pi'$ from $\pi$ by (a) and (b),
except that we add the permutation induced by
$\Pi_2(\Phi_k,i-\sum_{j<k}(d_j+r_j+1))$ instead of $\Phi_k$ into the
$k$th gap, where $\Pi_2$ is the same as in the proof of Lemma
\ref{lemma:a_permutation_can_be_reduced_to_null_iff...}. Conversely,
if there exists a $k'$ such that
\eqref{eq:condition_of_k'_for_succession} holds, then we can
construct $\pi'$ from $\pi$ by (a) and (b), except that we replace
the $k'$th point of $\pi$ by $B_2(f_{k'},i-\sum_{j<k'}(d_j+r_j+1))$
instead of $f_{k'}$, where $B_2$ is the same as in
\eqref{eq:B_2(f,w_2)}.

Thus, we have proved the statement.

Now, add points to $\pi$ by (a) and (b). The total number of points
added to $\pi$ is $d_1+\cdots+d_{m+1}+r_1+\cdots+r_m$. To obtain a
permutation in $S_{m+n}\cap A$, $d_1+\cdots+d_{m+1}+r_1+\cdots+r_m$
should be $n$. Therefore, the number of permutations in $S_{m+n}\cap
A$ is
\[
w_m^n=\sum\limits_{d_1,\ldots,d_{m+1}\atop r_1,\ldots,
r_m}C_{d_1}\cdots C_{d_{m+1}}a_{1+r_1}\cdots a_{1+r_m}
\]
where $C_k$ is the $k$th Catalan number and $a_k$ is the same as in
Lemma \ref{lemma:count_nice_bijection} and the sum runs over all
$(2m+1)$-triples $(d_1,\ldots,d_{m+1},r_1,\ldots,r_m)$ of
nonnegative numbers with sum $n$.

By Lemma \ref{lemma:count_nice_bijection}, the generating function
of $w_m^n$ is
$$c(x)^{m+1}\big(\frac{g(x)}{x}\big)^m=c(x)^{2m+1}.$$
\end{proof}

\begin{proof}[Proof of \eqref{eq:result_of_straight_menage_numbers}]
We can reduce each permutation in $S_n$ to a straight m\'enage
permutation in $S_i$ ($0\le i\le n$). Thus, we have
$$n!\,=\sum\limits_{i=0}^nw_i^{n-i}V_i$$
where $V_i$ is the $i$th straight m\'enage number and $w_i^{n-i}$ is
the same as in Theorem \ref{gf_of_omega}. Thus,
\begin{align*}
\sum\limits_{n=0}^{\infty}n!\,x^n=\sum\limits_{n=0}^\infty
\sum\limits_{i=0}^nw_i^{n-i}V_ix^n=\sum\limits_{i=0}^\infty
\sum\limits_{n=i}^\infty w_i^{n-i}V_ix^n=\sum\limits_{i=0}^\infty
\sum\limits_{n=0}^\infty w_i^nV_ix^nx^i=\sum\limits_{i=0}^\infty
c(x)^{2i+1}V_ix^i
\end{align*}
where the last equality is from Theorem \ref{gf_of_omega}.
\end{proof}

\section{Structure of ordinary m\'enage permutations}\label{sec:ordinary_menage_permutation} By definition, a permutation $\tau$ is an ordinary m\'enage
permutation if and only if we cannot apply reductions of either type
1 or type 3 on $\tau$. Similarly as in Section
\ref{sec:straight_menage_permutation}, we can reduce each
permutation $\pi$ to an ordinary m\'enage permutation by reductions
of type 1 and type 3. The resulting permutation does not depend on
the order of the reductions.

By the circular diagram representation of permutations, it is not
difficult to see that we can reduce a permutation $\pi$ to
$\pi_\emptyset$ by reductions of type 1 and type 2 if and only if we
can reduce $\pi$ to $\pi_\emptyset$ by reductions of type 1 and type
3. Thus, we have the following lemma:
\begin{lemma}\label{lemma:equivalence}
Suppose $\pi\in S_n$. We can reduce $\pi$ to $\pi_\emptyset$ by
reductions of type 1 and type 3 if and only if there is a
noncrossing partition inducing $\pi$. In particular, there are $C_n$
permutations of this type in $S_n$.
\end{lemma}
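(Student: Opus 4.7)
My plan is to mirror the structure of Lemma \ref{lemma:a_permutation_can_be_reduced_to_null_iff...}, adding one new case to handle the wrap-around generalized succession, which has no analogue under type 2 reductions. For the ($\Leftarrow$) direction, if $\pi$ is induced by a noncrossing partition, Lemma \ref{lemma:a_permutation_can_be_reduced_to_null_iff...} already provides a reduction of $\pi$ to $\pi_\emptyset$ via type 1 and type 2 reductions; since every (non-wrap) succession is automatically a generalized succession, the same sequence is a valid reduction via type 1 and type 3.

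For the ($\Rightarrow$) direction I would use induction on $n$, with the case $n=0$ trivial. Since $\pi$ is reducible and nonempty, it admits at least one fixed point or generalized succession, and I split into three cases. Case A: $\pi$ has a fixed point $i$. Case B: $\pi$ has a non-wrap succession $\{i,i+1\}$ with $i<n$. These are handled exactly as in Lemma \ref{lemma:a_permutation_can_be_reduced_to_null_iff...}: after the corresponding type 1 or type 2 reduction, the inductive hypothesis gives a noncrossing partition $\Phi$ of $[n-1]$ inducing the reduced $\pi'$, and the constructions $\Pi_1(\Phi,i)$ or $\Pi_2(\Phi,i)$ from that proof yield a noncrossing partition inducing $\pi$.

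Case C is the new ingredient: $\pi$ has only the wrap-around generalized succession $\{n,1\}$, i.e., $\pi(n)=1$ and there are no fixed points or non-wrap successions. The type 3 reduction at $i=n$ produces $\pi'\in S_{n-1}$ with $\pi'(\pi^{-1}(n))=1$; by induction $\pi'$ is induced by a noncrossing partition $\Phi$ of $[n-1]$. Let $U=\{1=b_1<b_2<\cdots<b_j\}$ be the block of $\Phi$ containing $1$; I would define $\tilde\Phi$ by replacing $U$ with $\tilde U=U\cup\{n\}$. The cycle associated to $\tilde U$ is $(1,b_2,\ldots,b_j,n)$, and since the reduction formula forces $b_j=\pi^{-1}(n)$, this matches the $\pi$-cycle through $1$ and $n$; the other cycles of $\pi$ and $\pi'$ agree, so $\tilde\Phi$ induces $\pi$.

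The main obstacle is verifying that $\tilde\Phi$ is noncrossing. Because $1\in U$ and $\Phi$ is noncrossing on $[n-1]$, any block $V\neq U$ of $\Phi$ must lie entirely inside a single gap $(b_k,b_{k+1})$ of $U$ or in the last gap $(b_j,n-1]$; otherwise a crossing of the form $1<v_1<b_k<v_2$ would arise between $U$ and $V$. Inserting $n$ into $U$ only enlarges the final gap from $(b_j,n-1]$ to $(b_j,n)$ and leaves the others unchanged, so every block of $\tilde\Phi\setminus\{\tilde U\}$ still lies inside a single gap of $\tilde U$, and no crossing is introduced. The $C_n$ count of such permutations then follows immediately from Lemma \ref{lemma:a_permutation_can_be_reduced_to_null_iff...}.
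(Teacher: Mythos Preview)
Your argument is correct, but it takes a different route from the paper. The paper does not rerun the induction; instead, in the sentence immediately preceding the lemma it asserts (appealing to the circular-diagram picture) that a permutation is reducible to $\pi_\emptyset$ by type~1 and type~2 reductions if and only if it is reducible to $\pi_\emptyset$ by type~1 and type~3 reductions, and then simply invokes Lemma~\ref{lemma:a_permutation_can_be_reduced_to_null_iff...}. The intuition is that on a circle there is no distinguished ``wrap-around'' edge, so the two reduction schemes coincide for the purpose of reaching $\pi_\emptyset$. Your approach, by contrast, reproves the noncrossing-partition characterization directly for type~1/3 reductions, supplying the explicit Case~C that lifts a noncrossing partition through the wrap-around reduction via $\tilde\Phi=(\Phi\setminus\{U\})\cup\{U\cup\{n\}\}$ and verifying noncrossingness by the ``gaps of the block containing $1$'' argument. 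What the paper's route buys is brevity: one sentence plus a citation. What your route buys is self-containment: you never rely on the unproved equivalence of the two reduction processes, and your Case~C makes the circular structure completely explicit.
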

In the following parts of Section
\ref{sec:ordinary_menage_permutation}, when we mention reductions,
we mean reductions of \textbf{type 1 or type 3} unless otherwise
specified.
\begin{theorem}\label{gf_of_r}
Suppose $m\ge0$ and $\pi\in S_m$ is an ordinary m\'enage
permutation. Let $r_m^n$ denote the cardinality of the set
\begin{align}\label{set:permutations_which_can_be_reduced_to_a_given_permutation_by_type_1_and_3}
\{\tau\in S_{m+n}|\text{we can reduce }\tau\text{ to }\pi\text{ by
reductions of type 1 and type 3}\}.
\end{align}
Then, the generating function of $r_m^n$ satisfies
\begin{align*}
R_m(x):=r_m^0+r_m^1x+r_m^2x^2+r_m^3x^3+\cdots=\begin{cases}c'(x)c(x)^{2m-2}&\text{if}\quad m>0;\\
c(x)&\text{if}\quad m=0.\end{cases}
\end{align*}
\end{theorem}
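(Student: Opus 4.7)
The case $m=0$ is immediate: $\pi=\pi_\emptyset$, so Lemma \ref{lemma:equivalence} gives $r_0^n=C_n$ and hence $R_0(x)=c(x)$. For the main case $m\ge 1$, my plan is to imitate the construction used in the proof of Theorem \ref{gf_of_omega}, carried out on the circular diagram of $\pi$, with one new ingredient to handle the cyclic labeling.

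Represent $\pi$ by its circular diagram with $m$ points labeled $1,\dots,m$ counter-clockwise. I would build every $\tau$ reducible to $\pi$ from three pieces of data: (a) a noncrossing partition of size $d_i\ge 0$ inserted in each of the $m$ gaps, (b) a nice bijection of size $r_j\ge 0$ replacing each of the $m$ points, and (c) a choice of which point of the resulting diagram is labeled $1$ in $\tau$. The requirement that label $1$ of $\pi$ be the first surviving point encountered going counter-clockwise from label $1$ of $\tau$ forces the choice in (c) to lie in the ``wrap-around region'': the noncrossing partition in gap $m$ (between the points labeled $m$ and $1$ of $\pi$), the nice bijection at the point labeled $1$ of $\pi$, and the point $1$ itself. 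This yields exactly $d_m+r_1+1$ legal choices.

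The analog of the key statement in the proof of Theorem \ref{gf_of_omega} is that the set of $\tau$ reducible to $\pi$ coincides with the set of triples (a)--(c). I would prove this by induction on $n$ as in the horizontal case: a non-m\'enage $\tau$ admits a type-$1$ or type-$3$ reduction to some $\tau'$, which by induction corresponds to a triple, and adding back the removed point (or ungluing the succession) yields a legal triple for $\tau$. The extra care is in configurations affecting the wrap-around region: for instance, a wrap-around succession $\{n,1\}$ in $\tau$ corresponds to choosing $r_1\ge 1$ with the label $1$ of $\tau$ kept at the original point $1$ of $\pi$, whereas a fixed point at label $1$ of $\tau$ corresponds to shifting label $1$ onto a new point sitting inside the wrap-around region.

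Once this structural bijection is in place, the counting is mechanical. The $m-1$ non-wrap-around gaps and the $m-1$ non-distinguished points together contribute $c(x)^{2m-2}$, using $\sum_{r\ge 0}a_{r+1}x^r=g(x)/x=c(x)$ from Lemma \ref{lemma:count_nice_bijection}. The wrap-around gap, the nice bijection at point $1$, and the label-$1$ choice jointly contribute
\[
\sum_{d_m,r_1\ge 0}(d_m+r_1+1)\,C_{d_m}\,a_{r_1+1}\,x^{d_m+r_1}=2xc'(x)c(x)+c(x)^2,
\]
which collapses to $c'(x)$ via $c'(x)=c(x)^2/(1-2xc(x))$ from Lemma \ref{thm:property_of_Catalan_numbers}. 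Multiplying the factors gives $R_m(x)=c'(x)c(x)^{2m-2}$. I expect the main obstacle to be the detailed case analysis in paragraph three: the interaction of reductions with the cyclic labeling near the wrap-around region must be tracked explicitly, and one must verify that the induction produces a unique triple (a)--(c) for each $\tau$.
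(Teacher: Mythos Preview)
Your proposal is correct and follows essentially the same route as the paper: represent $\pi$ circularly, build $\tau$ from noncrossing partitions in the $m$ gaps and nice bijections at the $m$ points, and observe that the choice of label~$1$ in $\tau$ must lie in the wrap-around region $P_1\cup Q_m$, giving the multiplicity factor $d_m+r_1+1$. The only cosmetic difference is the final algebra: the paper packages the weighted convolution as $\eta_k=(k+1)\sum_{r}C_rC_{k-r}$ and recognizes its generating function as $(xc^2(x))'=c'(x)$, whereas you expand $(d_m+r_1+1)$ into three terms and invoke the identity $c'(x)=c^2(x)+2xc(x)c'(x)$; both are immediate from Lemma~\ref{thm:property_of_Catalan_numbers}.
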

\begin{proof}
When $m=0$, $r_m^n=C_n$ by Lemma \ref{lemma:equivalence}. So
$R_0(x)=c(x)$. Now, suppose $m>0$.

Obviously $r_m^0=1$. Suppose $n>0$.

Represent $\pi$ by a circular diagram. The diagram has $m$ gaps: one
gap between each pair of adjacent points. Call the point
corresponding to number $i$ $point$ $i$.

Let $A$ denote the set in
\eqref{set:permutations_which_can_be_reduced_to_a_given_permutation_by_type_1_and_3}.
To obtain a permutation in $A$ we can add points into $\pi$ by the
following steps:
\begin{enumerate}
\item[(a)] Add a permutation induced by a noncrossing partition $\Phi_i$ of
$[d_i]$ into the gap between point $i$ and point $i+1$ (mod $m$),
where $1\le i\le m$ and $d_p\ge0$ ($d_p=0$ means we add nothing into
the gap). Use $Q_i$ to denote the set of points added into the gap
between point $i$ and point $i+1$ (mod $m$).
\item[(b)] Replace point $i$ by a nice bijection $f_i$ from
$[t_i]$ to $\{2,\ldots,t_i+1\}$, where $1\le i\le m$ and $t_i\ge0$
($t_i=0$ means that we do not change the $i$th point). Use $P_i$ to
denote the set of points obtained from this replacement. Thus, $P_i$
contains $t_i+1$ points.
\item[(c)] Specify a point in $P_1\bigcup Q_m$ to correspond to the
number 1 of the new permutation $\pi'$.
\end{enumerate}
Steps (a) and (b) are the same as in the proof of Theorem
\ref{gf_of_omega}, but Step (c) needs some explanation.

In the proof of Theorem \ref{gf_of_omega}, after adding points into
the permutation by (a) and (b), we defined $\pi'$ from the resulting
$horizontal$ diagram in a natural way, that is, the left most point
corresponds to 1, and the following points correspond to 2, 3, 4,
\ldots \,respectively. However, now there is no natural way to
define $\pi'$ from the resulting $circular$ diagram because we have
more than one choice of the point corresponding to number 1 of
$\pi'$.

Note that $\pi'$ can become $\pi$ by a series of reductions of type
1 or type 3. Then, for each $1\le u\le m$, the points in $P_u$ will
become point $u$ of $\pi$ after the reductions. Thus, we can choose
any point of $P_1$ to be the one corresponding to the number 1 of
$\pi'$. Moreover, we can also choose any point of $Q_m$ to be the
one corresponding to the number 1 of $\pi'$. The reason is that if a
permutation in $S_k$ has a cycle $(1,k)$, then a reduction of type 3
will reduce $(1,k)$ to the cycle $(1)$. Thus, we can choose any
point in $P_1\bigcup Q_m$ to be the point corresponding to number 1
of $\pi'$. To see this more clearly, let us look at an example.
Suppose $\pi$ and $f$ are as below:

\centerline{\includegraphics[width=3.2in]{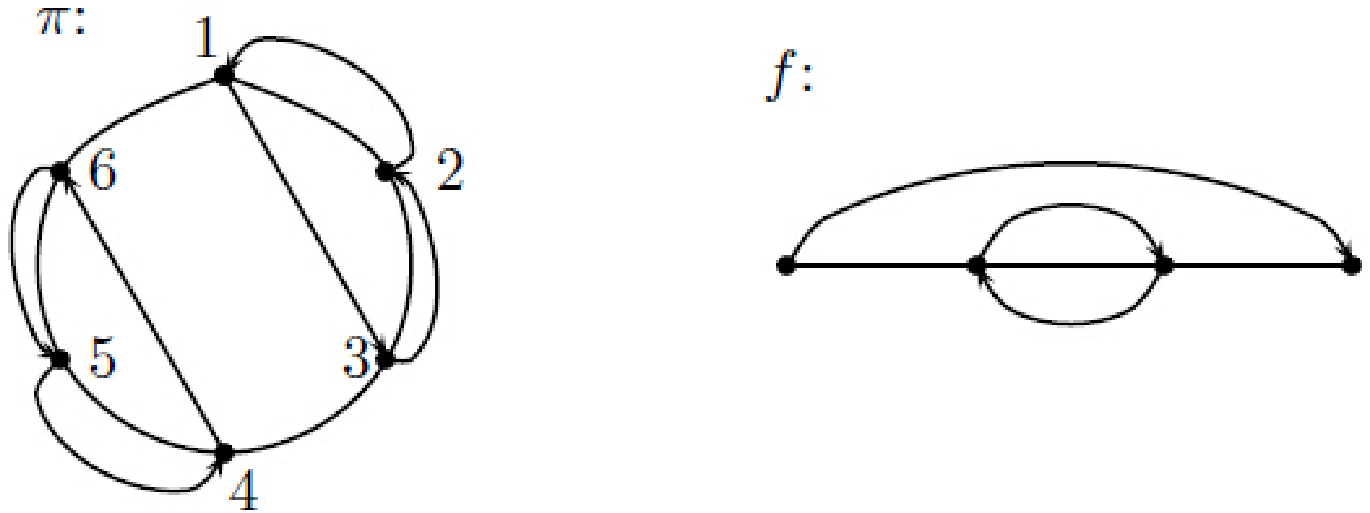}}

Then, we can add the permutation $(1,2)$ between point 2 and point
3, add the permutation $(1)(2,3)$ between point 6 and point 1 and
replace point 2 by $f$. Then, we obtain a new diagram:

\centerline{\includegraphics[width=2in]{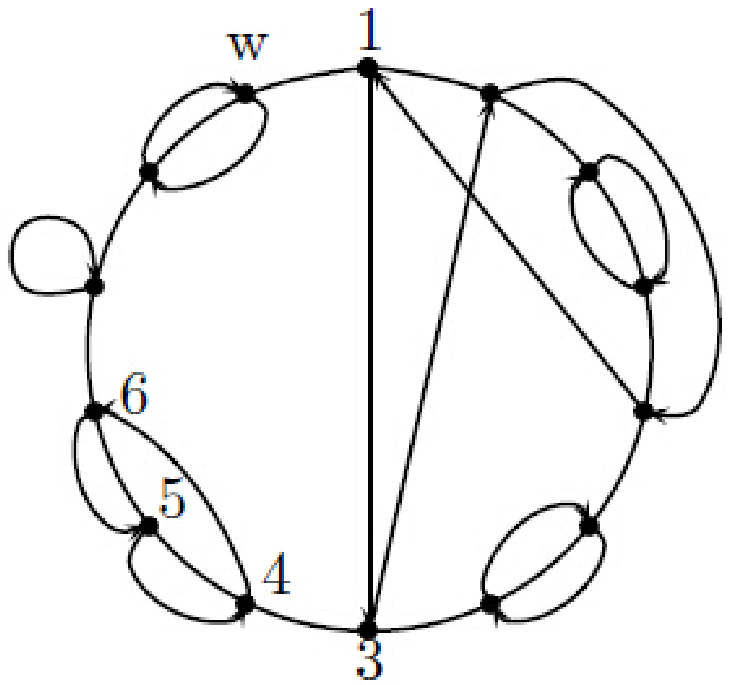}}

Now, we can choose point 1 or any of the three points between point
1 and point 6 to be the point corresponding to number 1 of the new
permutation $\pi'$.

For instance, if we set point 1 to be the point corresponding to
number 1 of $\pi'$, then
$$\pi'=(1,8,2,5)(3,4)(6,7)(9,11,10)(12)(13,14).$$ If we set point w to be the point corresponding to number
1 of $\pi'$, then
$$\pi'=(1,14)(2,9,3,6)(4,5)(7,8)(10,12,11)(13).$$

Now, continue the proof. By a similar argument to the one we used to
prove the statement in the proof of Theorem \ref{gf_of_omega}, we
have that the set of permutations constructed by (a)--(c) equals
$A$.

Now, add points to $\pi$ by (a)--(c). Then, $P_1\bigcup Q_m$
contains, in total, $d_m+(t_1+1)$ points. Thus, we have
$d_m+(t_1+1)$ ways to specify the point corresponding to number 1 in
$\pi'$. The total number added to $\pi$ is
$d_1+\cdots+d_m+t_1+\cdots+t_m$. Therefore, the number of
permutations in $S_{m+n}\cap A$ is
\[
r_m^n=\sum\limits_{d_1,\ldots,d_m\atop t_1,\ldots, t_m}C_{d_1}\cdots
C_{d_m}a_{1+t_1}\cdots a_{1+t_m}(d_m+t_1+1)
\]
where $C_k$ is the $k$th Catalan number, $a_k$ is the same as in
Lemma \ref{lemma:count_nice_bijection}, and the sum runs over all
$2m$-triples $(d_1,\ldots,d_m,t_1,\ldots, t_m)$ of nonnegative
integers such that $\sum_{u=1}^m(t_u+d_u)=n$.

Set $\eta_k=\sum\limits_{r=0}^ka_{r+1}C_{k-r}(k+1)$; from Lemma
\ref{lemma:count_nice_bijection}, we have
$$1+\frac{\eta_1}{2}x+\frac{\eta_2}{3}x^2+\frac{\eta_3}{4}x^3+\cdots=c^2(x).$$
By Lemma \ref{thm:property_of_Catalan_numbers}, the generating
function of $\eta_k$ is
$1+\eta_1x+\eta_2x^2+\eta_3x^3+\cdots=(xc^2(x))'=c'(x)$.

Thus, the generating function of $r_m^n$ is $c(x)^{2m-2}c'(x)$.
\end{proof}

\begin{proof}[Proof of \eqref{eq:result_of_ordinary_menage_numbers}]
We can reduce each permutation in $S_n$ to an ordinary m\'enage
permutation. Thus, we have
$$n!\,=\sum\limits_{i=0}^nr_i^{n-i}U_i$$
where $U_i$ is the $i$th ordinary m\'enage number and $r_i^{n-i}$ is
the same as in Theorem \ref{gf_of_r}. Thus,
\begin{multline*}
\sum\limits_{n=0}^{\infty}n!\,x^n=\sum\limits_{n=0}^\infty
\sum\limits_{i=0}^nr_i^{n-i}U_ix^n=\sum\limits_{i=0}^\infty
\sum\limits_{n=i}^\infty r_i^{n-i}U_ix^n\\=\sum\limits_{i=0}^\infty
\sum\limits_{n=0}^\infty r_i^nU_ix^nx^i
=c(x)+\sum\limits_{i=1}^\infty c'(x)c(x)^{2i-2}U_ix^i
\end{multline*}
where the last equality follows from Theorem \ref{gf_of_r}.
\end{proof}

\section{Counting m\'enage permutations by number of cycles}\label{count_permutations_by_cycles} We prove \eqref{eq:straight_by_cycles} in Section \ref{sec:count_straight_by_cycles} and prove \eqref{eq:ordinary_by_cycles} in
Section \ref{sec:count_ordinary_by_cycles}. Our main method is
coloring. We remark that one can also prove
\eqref{eq:straight_by_cycles} and \eqref{eq:ordinary_by_cycles} by
using the inclusion-exclusion principle in a similar way to
\cite{Gessel}.
\subsection{Coloring and weights}
For a permutation $\pi$, we use $f(\pi)$, $g(\pi)$, $h(\pi)$ and
$r(\pi)$ to denote the number of its cycles, fixed points,
successions and generalized successions, respectively. The following
lemma is well known.

\begin{lemma}\label{lemma:well_known_lemma_for_cycles} For $n\ge0$,
\begin{align*}
\sum\limits_{\pi\in S_n}\alpha^{f(\pi)}=(\alpha)_n.
\end{align*}
\end{lemma}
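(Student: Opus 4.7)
The plan is to prove this classical identity for the generating polynomial of cycle numbers by induction on $n$, using a bijective insertion construction suitable to the combinatorial spirit of the paper. The base case $n = 0$ is immediate since $S_0 = \{\pi_\emptyset\}$ contains only the empty permutation, which has no cycles, so the left side equals $1 = (\alpha)_0$.

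For the inductive step, I would build each $\pi \in S_n$ from a unique pair $(\pi', c)$, where $\pi' \in S_{n-1}$ and $c$ is a choice for where to insert the element $n$. There are two types of choices: either $n$ becomes a new fixed point, creating one additional cycle and multiplying the cycle weight by $\alpha$; or $n$ is spliced into an existing cycle immediately after one of the $n-1$ elements of $[n-1]$, leaving the number of cycles unchanged and contributing $n-1$ equally weighted options. Summing over all pairs yields
\[
\sum_{\pi \in S_n} \alpha^{f(\pi)} \;=\; (\alpha + n - 1) \sum_{\pi' \in S_{n-1}} \alpha^{f(\pi')} \;=\; (\alpha + n - 1)(\alpha)_{n-1} \;=\; (\alpha)_n,
\]
where the middle equality uses the induction hypothesis and the last uses the definition $(\alpha)_n = \alpha(\alpha+1)\cdots(\alpha+n-1)$.

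The only substantive step that needs verification is that the insertion scheme is a genuine bijection between $S_n$ and the set of pairs $(\pi', c)$. I would check this by exhibiting the inverse: given $\pi \in S_n$, locate $n$ in its cycle; if $n$ is a fixed point of $\pi$, delete it to obtain $\pi' \in S_{n-1}$; otherwise, remove $n$ by joining its predecessor directly to its successor within the cycle. This uniquely recovers both $\pi' \in S_{n-1}$ and the insertion choice $c$, confirming that the forward construction is bijective. Once this is established, the induction closes and the identity follows; no further computation is required.
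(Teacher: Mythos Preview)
Your proof is correct and complete: the insertion bijection you describe is the standard combinatorial proof of this identity, and your verification of its inverse is accurate. The paper itself does not supply a proof of this lemma at all; it simply labels the statement as ``well known'' and moves on. So rather than taking a different route, you have filled in a gap the author deliberately left to the reader, and your argument is exactly the one most readers would supply.
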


For a permutation, color some of its fixed points red and color some
of its $generalized$ successions yellow. Then, we obtain a colored
permutation. Here and in the following sections, we use the phrase
$colored$ $permutation$ as follows: if two permutations are the same
as maps but have different colors, then they are different colored
permutations.

Define $\mathbb{S}_n$ to be the set of colored permutations on $n$
objects. Set $A_n$ to be the subset of $\mathbb{S}_n$ consisting of
colored permutations with a colored generalized succession
$\{n,1\}$. Set $B_n=\mathbb{S}_n\backslash A_n$. So we can consider
$S_n$ as a subset of $\mathbb{S}_n$ consisting of permutations with
no color. In particular, $\mathbb{S}_0=S_0$.

Set
$$M_n^\alpha(t,u)=\sum\limits_{\pi\in
S_n}\alpha^{f(\pi)}t^{g(\pi)}u^{h(\pi)}\quad\quad\text{and}\quad\quad
L_n^\alpha(t,u)=\sum\limits_{\pi\in
S_n}\alpha^{f(\pi)}t^{g(\pi)}u^{\tau(\pi)}.$$

For a colored permutation $\epsilon\in\mathbb{S}_n$, define two
weights $W_1$ and $W_2$ of $\epsilon$ as:
\begin{align*}
W_1(\epsilon)&=x^n\cdot\alpha^{f(\epsilon)}\cdot t^{\text{number of
colored
fixed points}}\cdot u^{\text{number of colored successions}},\\
W_2(\epsilon)&=x^n\cdot\alpha^{f(\epsilon)}\cdot t^{\text{number of
colored fixed points}}\cdot u^{\text{number of colored generalized
successions}}.
\end{align*}

\begin{lemma}\label{lemma:sum_of_the_W1_and_W2_weights}
$\sum\limits_{\epsilon\in B_n}W_1(\epsilon)=\sum\limits_{\epsilon\in
B_n}W_2(\epsilon)=M_n^\alpha(1+t,1+u)x^n$,
$\sum\limits_{\epsilon\in\mathbb{S}_n}W_2(\epsilon)=L_n^\alpha(1+t,1+u)x^n$.
\end{lemma}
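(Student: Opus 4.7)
The plan is to convert each of the three sums into a sum over \emph{uncolored} permutations $\pi\in S_n$ of the generating polynomial that records all valid colorings of $\pi$, and then to read the result off directly from the definitions of $M_n^\alpha$ and $L_n^\alpha$.

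The first step is to isolate the structural observation that drives everything. If $i<n$ and $\pi(i)\equiv i+1\pmod n$, then automatically $\pi(i)=i+1$, so $\{i,\pi(i)\}$ is an ordinary succession; the only generalized succession that can fail to be an ordinary succession is $\{n,1\}$, and this appears precisely when $\pi(n)=1$. Hence the set of generalized successions of $\pi$ is the set of its $h(\pi)$ successions together with at most one extra element $\{n,1\}$, so $r(\pi)=h(\pi)+[\pi(n)=1]$. In a colored permutation belonging to $B_n$ we are explicitly forbidden from coloring $\{n,1\}$, so on $B_n$ the colored generalized successions coincide with the colored (ordinary) successions; consequently $W_1(\epsilon)=W_2(\epsilon)$ pointwise on $B_n$, which at once collapses the first two sums into one.

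Next I carry out the coloring enumerations. For a fixed $\pi\in S_n$, the choices of which fixed points to color red and which generalized successions to color yellow are mutually independent, so summing $W_2$ over all colored permutations whose underlying permutation is $\pi$ contributes
\[
\alpha^{f(\pi)}(1+t)^{g(\pi)}(1+u)^{r(\pi)}x^n,
\]
and summing over $\pi\in S_n$ yields $L_n^\alpha(1+t,1+u)\,x^n$, which is the third identity. For the $W_2$ sum over $B_n$, the only modification is that we suppress the $(1+u)$ factor associated with the forbidden generalized succession $\{n,1\}$, which collapses $(1+u)^{r(\pi)}$ to $(1+u)^{h(\pi)}$; summing over $\pi$ and recognizing the definition of $M_n^\alpha$ gives $M_n^\alpha(1+t,1+u)\,x^n$. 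The $W_1$ sum over $B_n$ then agrees with the $W_2$ sum over $B_n$ by the pointwise identification established in the previous paragraph.

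I do not anticipate a substantial obstacle; the lemma is essentially bookkeeping driven by the binomial identity $\sum_S x^{|S|}=(1+x)^N$. The only point that requires care is the precise matching of ``colored successions'' (tracked by $W_1$) with ``colored generalized successions'' (tracked by $W_2$) after restricting to $B_n$, which is why the structural observation that $B_n$ excludes exactly the non-succession generalized succession $\{n,1\}$ is the load-bearing step of the argument.
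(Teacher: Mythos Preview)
Your proof is correct and is exactly the argument the paper has in mind: the paper simply states that the lemma ``follows directly from the definitions of $M_n^\alpha$, $L_n^\alpha$, $B_n$, $W_1$ and $W_2$,'' and what you have written is a clean unpacking of precisely that claim via the binomial identity and the observation that on $B_n$ the colored generalized successions are exactly the colored ordinary successions.
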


\begin{proof}
The lemma follows directly from the definitions of $M_n^\alpha$,
$L_n^\alpha$, $B_n$, $W_1$ and $W_2$.
\end{proof}

\subsection{Counting straight m\'enage permutations by number of
cycles}\label{sec:count_straight_by_cycles}

In this subsection, we represent permutations by diagrams of
\textbf{horizontal type}.

Suppose $n\ge0$ and $\pi\in S_n$. Then, $\pi$ has $n+1$ gaps: one
gap before the first point, one gap after the last point and one gap
between each pair of adjacent points. We can add points to $\pi$ by
the following steps.

(a) Add the identity permutation of $S_{(d_p)}$ into the $p$th gap
of $\pi$, where $1\le p\le n+1$ and $d_p\ge0$ ($d_p=0$ means that we
add nothing into the $p$th gap).

(b) Replace the $q$th point of $\pi$ by a nice bijection from
$[r_q]$ to $\{2,\ldots,r_q+1\}$, which sends each $i$ to $i+1$.
Here, $1\le q\le n$ and $r_q\ge0$ ($r_q=0$ means that we do not
change the $q$th point).

(c) Color the fixed points added by (a) red, and color the
successions added by (b) yellow.

Then (a), (b) and (c) give a colored permutation in
$\bigcup_{n=0}^\infty B_n$.
\begin{lemma}\label{lemma:sum_of_W1_weight_of_things_constructed_from_pi} Suppose $\pi\in S_n$. The sum of the $W_1$-weights of all
colored permutations constructed from $\pi$ by (a)--(c) is
\[
x^n\cdot\alpha^{f(\pi)}\cdot\frac{1}{(1-\alpha
tx)^{n+1}}\cdot\frac{1}{(1-ux)^{n}}.
\]
\end{lemma}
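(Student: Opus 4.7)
The plan is to show that each colored permutation produced by steps (a)--(c) is uniquely parametrized by a tuple $(d_1,\ldots,d_{n+1},r_1,\ldots,r_n)$ of nonnegative integers, then compute $W_1$ of such a colored permutation explicitly and sum over the parameters as a product of geometric series.

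First, I would verify the parametrization: given the resulting colored permutation $\pi'$, the positions of the red fixed points reveal the $d_p$'s (one red fixed point for each element added in step (a)), and the positions of the yellow successions reveal the $r_q$'s, since step (b) replaces the $q$th point by a chain of $r_q+1$ points connected by successive arcs $i\mapsto i+1$. Thus different tuples yield genuinely different colored permutations, and the sum is a true sum over tuples.

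Next I would track how each quantity appearing in $W_1$ behaves under the construction. The total size satisfies $|\pi'|=n+\sum_p d_p+\sum_q r_q$. The cycle count is $f(\pi')=f(\pi)+\sum_p d_p$: each fixed point added in (a) contributes a new cycle of length $1$, whereas replacing a point in (b) with the chain $i\mapsto i+1$ merely lengthens the cycle through that point without altering the cycle count (the chain integrates into the existing cycle as $\pi^{-1}(p)\to q\to 2\to\cdots\to p$). The number of colored (red) fixed points is $\sum_p d_p$ by construction, and the number of colored (yellow) successions is $\sum_q r_q$, since a chain of $r_q+1$ points produces exactly $r_q$ successions, all added by (b) and colored in (c). Substituting,
\[
W_1(\pi')=x^n\cdot\alpha^{f(\pi)}\cdot\prod_{p=1}^{n+1}(\alpha tx)^{d_p}\cdot\prod_{q=1}^{n}(ux)^{r_q}.
\]

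Finally, summing over all $(d_1,\ldots,d_{n+1},r_1,\ldots,r_n)\in\mathbb{N}^{2n+1}$ factors into $2n+1$ independent geometric series, yielding
\[
x^n\alpha^{f(\pi)}\Bigl(\sum_{d\ge 0}(\alpha tx)^d\Bigr)^{n+1}\Bigl(\sum_{r\ge 0}(ux)^r\Bigr)^n=x^n\alpha^{f(\pi)}\cdot\frac{1}{(1-\alpha tx)^{n+1}}\cdot\frac{1}{(1-ux)^n}.
\]
The only delicate point, and the place I would be most careful, is the cycle-count claim $f(\pi')=f(\pi)+\sum_p d_p$: one must check that the chain-replacement of step (b) truly preserves cycle count regardless of whether $p$ is a fixed point, part of a larger cycle of $\pi$, or in a cycle already containing a previously modified point. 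This follows from the description of steps (2)--(4) for replacement in Section \ref{sec:nice_bijection}, since the arcs into and out of $p$ are simply rerouted through the inserted chain, so exactly one cycle of $\pi$ is lengthened and none are created or merged.
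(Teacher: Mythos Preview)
Your proof is correct and follows essentially the same approach as the paper's: both compute the $W_1$-weight contribution of each gap (yielding $(1-\alpha tx)^{-1}$ per gap) and each point-replacement (yielding $(1-ux)^{-1}$ per point) as independent geometric series and multiply them against the base weight $x^n\alpha^{f(\pi)}$. Your version is slightly more explicit about the bijectivity of the parametrization and about why step~(b) preserves the cycle count, but there is no substantive difference in method.
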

\begin{proof}
In Step (a), we added $d_p$ fixed points into the $p$th gap. They
contribute $(xt\alpha)^{d_p}$ to the weight because each of them is
a single cycle and a colored fixed point. Because $d_p$ can be any
nonnegative integer, the total contribution of the fixed points
added into a gap is $\frac{1}{(1-\alpha tx)}$. Thus, the total
contribution of the fixed points added into all the gaps is
$\frac{1}{(1-\alpha tx)^{n+1}}$.

In Step (b), through the replacement on the $q$th point, we added
$r_q$ points and $r_q$ successions to $\pi$ (each of which received
a color in Step (c)). Thus, the contribution of this replacement to
the weight is $(ux)^{d_q}$. Because $d_q$ can be any nonnegative
integer, the total contribution of the nice bijections replacing the
$q$th point is $\frac{1}{(1-ux)}$. Thus, the total contribution of
the nice bijections corresponding to all points is
$\frac{1}{(1-ux)^{n}}$.

Observing that the $W_1$-weight of $\pi$ is
$x^n\cdot\alpha^{f(\pi)}$, we complete the proof.
\end{proof}

Suppose $\epsilon$ is a colored permutation in $\bigcup_{n=0}^\infty
B_n$. If we perform reductions of type 1 on the colored fixed points
and perform reductions of type 2 on the colored successions, then we
obtain a new permutation $\epsilon'$ with no color. This $\epsilon'$
is the only permutation in $\bigcup_{n=0}^\infty S_n$ from which we
can obtain $\epsilon$ by Steps (a)--(c). Therefore, there is a
bijection between $\bigcup_{n=0}^\infty B_n$ and
$$\bigcup_{n=0}^\infty\bigcup_{\pi\in S_n}\{\text{colored permutation
constructed from $\pi$ through Steps (a)--(c)}\}.$$ Because of the
bijection, Lemmas \ref{lemma:sum_of_the_W1_and_W2_weights} and
\ref{lemma:sum_of_W1_weight_of_things_constructed_from_pi} imply
that
\begin{align}\label{kkk}
\sum\limits_{n=0}^\infty
M_n^\alpha(1+t,1+u)x^n=\sum\limits_{n=0}^\infty\sum\limits_{\pi\in
S_n}\dfrac{x^n\alpha^{f(\pi)}}{(1-\alpha tx)^{n+1}(1-ux)^{n}}.
\end{align}
\begin{proof}[Proof of \eqref{eq:straight_by_cycles}]
By Lemma \ref{lemma:well_known_lemma_for_cycles} and \eqref{kkk},
the sum of the $W_1$-weights of colored permutations in
$\bigcup\limits_{n=0}^\infty B_n$ is
\begin{align}\label{lll}
\sum\limits_{n=0}^\infty
M_n^\alpha(1+t,1+u)x^n=\sum\limits_{n=0}^\infty
\frac{x^n(\alpha)_n}{(1-\alpha tx)^{n+1}(1-ux)^{n}}.
\end{align}
Setting $t=u=-1$, we have
\[
\sum\limits_{n=0}^\infty M_n^\alpha(0,0)x^n=\sum\limits_{n=0}^\infty
\frac{x^n(\alpha)_n}{(1+\alpha x)^{n+1}(1+x)^{n}}.
\]

Recall that straight m\'enage permutations are permutations with no
fixed points or successions. Thus, for $n>0$, $M_n^\alpha(0,0)$ is
the sum of the $W_1$-weights of straight m\'enage permutations in
$S_n$, which is $\sum\limits_{j=1}^nC_n^j\alpha^jx^n$. Furthermore,
$M_0^\alpha(0,0)=1$. Thus, we have proved
\eqref{eq:straight_by_cycles}.
\end{proof}

\subsection{Counting ordinary m\'enage permutations by number of
cycles}\label{sec:count_ordinary_by_cycles} In this subsection, we
represent permutations by diagrams of the \textbf{horizontal type}.

Suppose $n\ge0$ and $\pi\in S_n$. Define $\mathbb{S}_m(\pi)$ to be
the a subset of $\mathbb{S}_m$: $\tau\in\mathbb{S}_m$ is in
$\mathbb{S}_m(\pi)$ if and only if when we apply reductions of type
1 on the colored fixed points of $\tau$ and apply reductions of type
3 on the colored generalized successions of $\tau$, we obtain $\pi$.
Define $A_m(\pi)=\mathbb{S}_m(\pi)\cap A_m$ and
$B_m(\pi)=\mathbb{S}_m(\pi)\backslash A_m(\pi)$.

\begin{lemma}\label{lemma:sum_of_W2_weights_in_all_An}
The $W_2$-weights of colored permutations in $\bigcup_{n=0}^\infty
A_n$ are
\begin{align*}
\sum\limits_{n=1}^\infty x^n(\alpha)_n\cdot\frac{1}{(1-\alpha
tx)^n}\cdot\frac{1}{(1-ux)^{n+1}}\cdot ux+\alpha utx+\frac{\alpha
ux}{1-ux}.
\end{align*}
\end{lemma}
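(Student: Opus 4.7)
The plan is to classify every colored permutation $\tau\in A:=\bigcup_{n\ge 1}A_n$ by its \emph{base} $\pi$, obtained from $\tau$ by iteratively performing type-$1$ reductions on colored fixed points and type-$3$ reductions on colored generalized successions until no colored structure remains. The argument then splits by whether $\pi$ is empty.

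\textbf{Case 1: $\pi\in S_m$ with $m\ge 1$.} Mirroring Section \ref{sec:count_straight_by_cycles} in the circular setting, I build $\tau$ from $\pi$ in four steps. Draw $\pi$ horizontally with the circular ``wrap-around'' gap (between points $m$ and $1$) placed at the right end of the diagram. Then: (a) insert an identity permutation with red-colored fixed points into each of the $m$ circular gaps of $\pi$; (b) replace each of the $m$ points of $\pi$ by a shift nice bijection $i\mapsto i+1$ with yellow-colored successions; (c) at the far right of the wrap-around gap, adjoin a brand-new point $N$ together with a yellow arc $N\to 1$ (the mandated colored generalized succession $\{N,1\}$), and simultaneously redirect the arc which previously ended at $1$ to end at $N$; (d) optionally replace $N$ itself by a shift nice bijection with yellow-colored successions. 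By the same uniqueness argument as the ``Statement'' part of the proof of Theorem \ref{gf_of_omega}, the colored reductions on $\tau$ recover $\pi$ together with all the choices made in (a)--(d) unambiguously, so this is a bijection onto the Case 1 portion of $A$. Tallying $W_2$-weights---$\alpha^{f(\pi)}x^m$ for the base, $(1-\alpha tx)^{-m}$ from (a), $(1-ux)^{-m}$ from (b), $ux$ from (c), $(1-ux)^{-1}$ from (d)---and using Lemma \ref{lemma:well_known_lemma_for_cycles} to replace $\sum_{\pi\in S_m}\alpha^{f(\pi)}$ by $(\alpha)_m$, Case 1 contributes exactly the series $\sum_{m\ge 1}\frac{x^m(\alpha)_m\cdot ux}{(1-\alpha tx)^m(1-ux)^{m+1}}$.

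\textbf{Case 2: $\pi=\pi_\emptyset$.} Since a reduction never creates a \emph{colored} structure, requiring the chain of colored reductions to terminate at the empty permutation is very restrictive. A short case analysis shows that only two families of $\tau$ survive: (i) $\tau=(1)\in S_1$ with both its fixed point and its generalized succession colored, contributing $\alpha utx$; and (ii) the pure cycle $\tau=(1,2,\ldots,n)$ for some $n\ge 1$ with all $n$ generalized successions colored, jointly contributing $\sum_{n\ge 1}\alpha u^n x^n=\frac{\alpha ux}{1-ux}$. Adding these two special contributions to the Case 1 series yields the claimed formula. The main obstacle is the verification in Case 2: one must check, for instance, that fully colored examples such as $(1,3)(2)\in S_3$ (with its fixed point at $2$ and its generalized succession $\{3,1\}$ both colored) do \emph{not} reduce to $\pi_\emptyset$ but rather stop at $(1)\in S_1$, because reducing the colored fixed point produces an intermediate $(1,2)\in S_2$ carrying a newly created \emph{uncolored} generalized succession at $i=1$ which blocks the reduction from reaching $\pi_\emptyset$; such examples therefore belong to Case 1 with $m=1$ and are already captured by the main series.
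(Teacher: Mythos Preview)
Your proof is correct and follows essentially the same approach as the paper: both decompose $\bigcup_n A_n$ according to the uncolored base permutation $\pi$, handle $\pi\in S_m$ with $m\ge 1$ by adjoining one extra point carrying the mandatory yellow wrap-around (your $N$, the paper's point $n{+}1$ in $\tilde\pi$) and then inserting colored fixed points into the $m$ internal gaps and colored shift-successions at the $m{+}1$ points, and handle $\pi=\pi_\emptyset$ by the explicit families of fully-colored cycles together with the doubly-colored $id_1$. Your steps (a)--(d) are a cosmetic reordering of the paper's $(a')$--$(d')$ yielding the identical weight $\dfrac{(\alpha)_m x^m\cdot ux}{(1-\alpha tx)^m(1-ux)^{m+1}}$, and your Case~2 families (i)--(ii) match the paper's Cases~2--4.
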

\begin{proof}
We first evaluate the sum of $W_2$-weights of colored permutations
in $\bigcup_{m=n}^\infty A_m(\pi)$ and then add them up with respect
to $\pi\in S_n$ and $n\ge0$.

\textbf{Case 1: $n>0$.} In this case, for $\pi\in S_n$, we can
construct a colored permutation in $\bigcup_{m=n}^\infty A_m(\pi)$
by the following steps.

($a^\prime$) Define $\tilde\pi$ to be a permutation in $S_{n+1}$
that sends $\pi^{-1}(1)$ to $n+1$, sends $n+1$ to 1 and sends all
other $j$ to $\pi(j)$. Represent $\tilde\pi$ by a horizontal
diagram.

($b^\prime$) For $1\le p\le n-1$, add the identity permutation of
$S_{(d_p)}$ into the gap of $\tilde\pi$ between number $p$ and
$p+1$, where $d_p\ge0$ ($d_p=0$ means we add nothing into the gap).

($c^\prime$) Replace the $q$th point of $\tilde\pi$ by a nice
bijection from $[r_q]$ to $\{2,\ldots,r_q+1\}$, which maps each $i$
to $i+1$. Here, $1\le q\le n$ and $r_q\ge0$ ($r_q=0$ means that we
do not change the $q$th point).

($d^\prime$) Color the generalized succession consisting of 1 and
the largest number yellow. Color the fixed points and generalized
successions added by ($b^\prime$)--($c^\prime$) red and yellow,
respectively.

Suppose $\epsilon\in\bigcup_{m=n}^\infty A_m(\pi)$. If we perform
reductions of type 1 on its colored fixed points and perform
reductions of type 3 on its colored generalized successions, we
obtain $\pi$. Furthermore, $\pi$ is the only permutation in
$\bigcup_{k=0}^\infty S_k$ from which we can obtain $\epsilon$
through ($a^\prime$)--($d^\prime$). Therefore, there is a bijection
between $\bigcup_{m=n}^\infty A_m(\pi)$ and
$$\{\text{colored
permutations constructed from $\pi$ through
($a^\prime$)--($d^\prime$)}\}.$$

Now, we can claim that the sum of $W_2$-weight of the colored
permutations in $\bigcup_{m=n}^\infty A_m(\pi)$ is
\begin{align}\label{W2_weight_of...}
x^n\alpha^{f(\pi)}\cdot\frac{1}{(1-\alpha
tx)^n}\cdot\frac{1}{(1-ux)^{n+1}}\cdot ux.
\end{align}
In \eqref{W2_weight_of...}, $x^n\alpha^{f(\pi)}$ is the $W_2$-weight
of $\pi$. The term $\frac{1}{(1-\alpha tx)^n}$ corresponds to the
fixed points added to the permutation in ($b^\prime$). The term
$\frac{1}{(1-ux)^{n+1}}$ corresponds to the successions added to the
permutation in ($c^\prime$). The term $ux$ corresponds to the
generalized succession $\{n+1,1\}$ added to the permutation in
($a^\prime$).

\textbf{Case 2: $n=0$ and $m=0$.} In this case, $A_0(\pi_\emptyset)$
is empty.

\textbf{Case 3: $n=0$ and $m\ge2$.} In this case,
$A_m(\pi_\emptyset)$ contains one element: the cyclic permutation
$\pi_C$, which maps each $i$ to $i+1$ (mod $m$). Each generalized
succession of $\pi_C$ has a color, so the sum of the $W_2$-weight of
the colored permutations in $A_m(\pi_\emptyset)$ is $\alpha(ux)^m$.

\textbf{Case 4: $n=0$ and $m=1$.} In this case, $A_1(\pi_\emptyset)$
contains one map: $id_1\in S_1$. However, $id_1$ can have two types
of color, namely, yellow and red+yellow, because $id_1$ has one
fixed point and one generalized succession. Thus,
$A_1(\pi_\emptyset)$ contains two colored permutations, and the sum
of their $W_2$-weights is $\alpha ux+\alpha tux$.

We remark that the identity permutation $id_1$ actually corresponds
to four colored permutations. In addition to the two in
$A_1(\pi_\emptyset)$, the other two are $id_1$, with a red color for
its fixed point, and $id_1$, with no color. We have considered these
colored permutations in $B_1(\pi_\emptyset)$ and $B_1(id_1)$,
respectively.

Because $\bigcup_{n=0}^\infty
A_n=\bigcup_{n=0}^\infty\bigcup_{\pi\in S_n}\bigcup_{m=n}^\infty
A_m(\pi)$, the sum of the $W_2$-weights of the colored permutations
in $\bigcup_{n=0}^\infty A_n$ equals the sum of the weights found in
Case 1--4. Thus, the sum is
\begin{multline*}
\Bigg(\sum\limits_{n=1}^\infty\sum\limits_{\pi\in
S_n}x^n\alpha^{f(\pi)}\cdot\frac{1}{(1-\alpha
tx)^n}\cdot\frac{1}{(1-ux)^{n+1}}\cdot
ux\Bigg)+\big(\sum\limits_{m=2}^\infty\alpha(ux)^m\big)+\big(\alpha
ux+\alpha
tux\big)\\
=\sum\limits_{n=1}^\infty x^n(\alpha)_n\cdot\frac{1}{(1-\alpha
tx)^n}\cdot\frac{1}{(1-ux)^{n+1}}\cdot ux+\alpha utx+\frac{\alpha
ux}{1-ux}
\end{multline*}
where we used Lemma \ref{lemma:well_known_lemma_for_cycles}.
\end{proof}

\begin{proof}[Proof of \eqref{eq:ordinary_by_cycles}]
By Lemma \ref{lemma:sum_of_the_W1_and_W2_weights},
$\sum\limits_{n=0}^\infty L_n^\alpha(1+t,1+u)x^n$ is the sum of the
$W_2$-weights of the colored permutations in
$\bigcup_{n=0}^\infty\mathbb{S}_n$. By Lemma
\ref{lemma:sum_of_the_W1_and_W2_weights} and \eqref{lll}, the sum of
the $W_2$-weights of all colored permutations in
$\bigcup_{n=0}^\infty B_n$ is
$$\sum\limits_{n=0}^\infty \frac{x^n(\alpha)_n}{(1-\alpha
tx)^{n+1}(1-ux)^{n}}.$$ Because
$\bigcup_{n=0}^\infty\mathbb{S}_n=(\bigcup_{n=0}^\infty
B_n)\bigcup(\bigcup_{n=0}^\infty A_n)$, Lemma
\ref{lemma:sum_of_W2_weights_in_all_An} implies

\begin{align}\label{eq:equation}
&\sum\limits_{n=0}^\infty L_n^\alpha(1+t,1+u)x^n\nonumber\\
=&\Bigg(\sum\limits_{n=0}^\infty \frac{x^n(\alpha)_n}{(1-\alpha
tx)^{n+1}(1-ux)^{n}}\Bigg)+\Bigg(\sum\limits_{n=1}^\infty
x^n(\alpha)_n\cdot\frac{1}{(1-\alpha
tx)^n}\cdot\frac{1}{(1-ux)^{n+1}}\cdot ux+\alpha utx+\frac{\alpha
ux}{1-ux}\Bigg)\nonumber\\
=&\sum\limits_{n=0}^\infty \bigg[\frac{x^n(\alpha)_n}{(1-\alpha
tx)^{n+1}(1-ux)^{n+1}}(1-\alpha tux^2)\bigg]+\alpha
utx+\frac{(\alpha-1) ux}{1-ux}.
\end{align}
Recall that ordinary m\'enage permutations are permutations with no
fixed points and no generalized successions. By definition,
$L_0^\alpha(0,0)x^0=1$. When $n\ge1$, $L_n^\alpha(0,0)x^n$ is the
sum of the $W_2$-weights of all ordinary m\'enage permutations in
$S_n$, which equals $\sum\limits_{j=1}^nD_n^j\alpha^jx^n$. Thus,
$\sum\limits_{n=0}^\infty L_n^\alpha(0,0)x^n$ equals the left side
of \eqref{eq:ordinary_by_cycles}. When we set $t=u=-1$, the left
side of \eqref{eq:equation} equals the left side of
\eqref{eq:ordinary_by_cycles}, and the right side of
\eqref{eq:equation} equals the right side of
\eqref{eq:ordinary_by_cycles}. We have proved
\eqref{eq:ordinary_by_cycles}.
\end{proof}

\section{An analytical proof of \eqref{eq:result_of_straight_menage_numbers} and
\eqref{eq:result_of_ordinary_menage_numbers}}\label{appendix}

Now, we derive \eqref{eq:result_of_straight_menage_numbers} and
\eqref{eq:result_of_ordinary_menage_numbers} from
\eqref{eq:known_formulas_for_U_n} and
\eqref{eq:known_formulas_for_V_n}. By
\eqref{eq:known_formulas_for_V_n},
\begin{align}\label{qqq}
\sum\limits_{n=0}^\infty
V_nx^n&=\sum\limits_{n=0}^\infty\sum\limits_{k=0}^n(-1)^k{2n-k\choose
k}(n-k)!\,x^n=\sum\limits_{k=0}^\infty\sum\limits_{n=k}^\infty(-1)^k{2n-k\choose
k}(n-k)!\,x^n\nonumber\\&=\sum\limits_{k=0}^\infty\sum\limits_{n=0}^\infty(-1)^k{2n+k\choose
k}n!\,x^{n+k}=\sum\limits_{n=0}^\infty
n!\,x^n\bigg[\sum\limits_{k=0}^\infty(-1)^k{2n+k\choose
k}x^k\bigg]\nonumber\\&=\sum\limits_{n=0}^\infty
n!\,\dfrac{x^n}{(1+x)^{2n+1}}.
\end{align}
Letting $x=zc^2(z)$, from Lemma
\ref{thm:property_of_Catalan_numbers} we have $1+x=c(z)$,
$\dfrac{x}{(1+x)^2}=z$ and
\begin{align*}
\sum\limits_{n=0}^\infty V_nz^nc^{2n}(z)=\sum\limits_{n=0}^\infty
V_nx^n=\sum\limits_{n=0}^\infty
n!\,\dfrac{x^n}{(1+x)^{2n+1}}=\sum\limits_{n=0}^\infty
n!\,\dfrac{z^n}{c(z)}.
\end{align*}
which implies \eqref{eq:result_of_straight_menage_numbers}. From
\eqref{eq:known_formulas_for_U_n},
\begin{align*}
U_n=\begin{cases}1&\text{if }n=0;\\0&\text{if
}n=1;\\\sum\limits_{k=0}^n(-1)^k{2n-k\choose
k}(n-k)!+\sum\limits_{k=1}^n(-1)^k{2n-k-1\choose k-1}(n-k)!&\text{if
}n>1.\end{cases}
\end{align*}
Thus
\begin{align*}
\sum\limits_{n=0}^\infty U_nx^n
&=x+\sum\limits_{n=0}^\infty\sum\limits_{k=0}^n(-1)^k{2n-k\choose
k}(n-k)!\,x^n+\sum\limits_{n=1}^\infty\sum\limits_{k=1}^n(-1)^k{2n-k-1\choose
k-1}(n-k)!\,x^n\\
&=x+\sum\limits_{n=0}^\infty\sum\limits_{k=0}^n(-1)^k{2n-k\choose
k}(n-k)!\,x^n+\sum\limits_{n=0}^\infty\sum\limits_{k=0}^n(-1)^{k+1}{2n-k\choose
k}(n-k)!\,x^{n+1}\\
&=x+(1-x)\sum\limits_{n=0}^\infty\sum\limits_{k=0}^n(-1)^k{2n-k\choose
k}(n-k)!\,x^n\nonumber.
\end{align*}
Then, by \eqref{qqq},
\begin{align*}
\sum\limits_{n=0}^\infty U_nx^n=x+(1-x)\sum\limits_{n=0}^\infty
n!\,\dfrac{x^n}{(1+x)^{2n+1}}.
\end{align*}
Noticing $U_0=1$ we have $$1-x+\sum\limits_{n=1}^\infty
U_nx^n=(1-x)\sum\limits_{n=0}^\infty n!\,\dfrac{x^n}{(1+x)^{2n+1}}$$
and
\begin{align}\label{eq:for_analytical_proof_of_ordinary}
1+x+\dfrac{1+x}{1-x}\sum\limits_{n=1}^\infty
U_nx^n=\sum\limits_{n=0}^\infty n!\,\dfrac{x^n}{(1+x)^{2n}}.
\end{align}
Letting $x=zc^2(z)$, from Lemma
\ref{thm:property_of_Catalan_numbers} and
\eqref{eq:for_analytical_proof_of_ordinary}, we have $1+x=c(z)$,
$\dfrac{x}{(1+x)^2}=z$ and
\begin{align*}
\sum\limits_{n=0}^\infty n!\,z^n=c(z)+\dfrac{c(z)}{1-zc^2(z)}\sum\limits_{n=1}^\infty
U_nz^n(c(z))^{2n}=c(z)+\dfrac{(c(z))^3}{1-zc^2(z)}\sum\limits_{n=1}^\infty
U_nz^n(c(z))^{2n-2}.
\end{align*}
Then, \eqref{eq:result_of_ordinary_menage_numbers} follows from the
above equation and Lemma \ref{thm:property_of_Catalan_numbers}.

\section{Acknowledgements}
It is a pleasure to thank Ira Gessel for introducing me to this
topic and for some helpful conversations. I also thank Olivier
Bernardi for some helpful suggestions.

\Addresses

\end{document}